\begin{document}

\begin{frontmatter}



\title{ Fourier spectral approximation for the convective Cahn-Hilliard equation in 2D case}

 \author[label1]{Xiaopeng Zhao\corref{cor1}}
 \ead{zhaoxiaopeng@jiangnan.edu.cn}
 \author[label2]{ Fengnan  Liu}
 \address[label1]{School of Science, Jiangnan University, Wuxi 214122, China}
\address[label2]{College of Mathematics, Jilin University, Changchun 130012, China}

\begin{abstract}
 In this paper, we consider
the Fourier spectral method for numerically solving the 2D convective
Cahn-Hilliard equation. The semi-discrete and fully discrete schemes
are established. Moreover, the existence, uniqueness and the optimal
error bound are also considered.
\end{abstract}

\begin{keyword}
Fourier spectral method,
convective Cahn-Hilliard equation, error estimate.
\MSC 65M70, 35G25, 35G30.

\end{keyword}

\end{frontmatter}

\newtheorem{theorem}{Theorem}
\newtheorem{lemma}[theorem]{Lemma}
\newtheorem{remark}[theorem]{Remark}
\newtheorem{definition}[theorem]{Definition}
\newproof{pf}{Proof}

\section{Introduction}

Suppose that $\Omega =[0, L_1]\times [0, L_2]$, $L_1, L_2>0$. We consider the following problem for the 2D convective Cahn-Hilliard equation. We seek a real-valued function $u(x,y,t)$ defined on $\Omega\times[0,T]$
\begin{equation}
\label{1-1} \frac{\partial u}{\partial t}+\gamma \Delta^2 u=\Delta
\varphi(u)+\nabla\cdot\psi(u),\quad (x,y)\in\Omega,~t\in(0,T],
\end{equation}
where
$\varphi(u)=\gamma_2u^3+\gamma_1 u^2-u,~\psi(u)=u^2$. $\gamma>0,$ $
\gamma_2>0$ and $\gamma_1$ are constants.
On the basis of physical considerations, as usual Eq.(\ref{1-1}) is
supplemented with the following boundary value conditions 
\begin{equation}
\label{1-2}u(x,y,t)=\Delta u(x,y,t)=0,\quad (x,y)\in\partial\Omega,
\end{equation}
and the
initial value condition 
\begin{equation}
\label{1-3} u(x,y,0)=u_0(x),~~(x,y)\in\Omega.
\end{equation}

Eq.(\ref{1-1}) is a typical fourth order parabolic equation, which
arises naturally as a continuous model for the formation of facets
and corners in crystal growth, see \cite{GAA,WSJ}. Here $u(x,t)$
denotes the slope of the interface. The convective term
$\nabla\cdot\psi(u)$ (see \cite{GAA}), stems from the effect of kinetic
that provides an independent flux of the order parameter, similar to
the effect of an external field in spinodal decomposition of a
driven system.

During the past years, many authors have paid much attention to the
convective Cahn-Hilliard equation. It was K. H. Kwek \cite{KKH} who
first studied the convective Cahn-Hilliard equation for the case
with convection, namely, $\psi(u)=u$. By some a priori estimates, he
proved the existence of a classical solution, and gave the error
estimates by the discontinuous Galerkin method. Zarksm et al
\cite{ZMPNG} investigate bifurcations of stations periodic solutions
of a convective Cahn-Hilliard equation, they described phase
separation in driven systems, and studied the stability of the main
family of these solutions.  Eden and Kalantarov \cite{Eden1, Eden2}
considered the convective Cahn-Hilliard equation with periodic
boundary conditions in one space dimension and three space
dimension. They established some result on the existence of a
compact attractor. Recently, Gao and Liu\cite{Gao} studied the instability of the traveling waves of the 1D convective Cahn-Hilliard equation. Zhao and Liu\cite{ZL1,ZL2} considered the optimal control problem for the convective Cahn-Hilliard equation in 1D and 2D case.
 For more
recent results on the convective Cahn-Hilliard equation, we refer
the reader to \cite{Liu2,Podolny, Zhao2} and the references
therein.

It is known to all, spectral methods are essentially discretization
methods for the approximate solution of partial differential
equations. They have the natural advantage in keeping the physical
properties of primitive problems \cite{Canuto,Chai, Ye1}.
On the other hand, until to now, there's no numerical results on the
convective Cahn-Hilliard equation by spectral methods. So, in this
paper, a Fourier spectral method for numerically solving problem
(\ref{1-1})-(\ref{1-3}) is developed.

\begin{remark}For the classical Cahn-Hilliard
equation(see\cite{Chai,Ye1,Elliott1, HYN}), there are two
important features: conservation of mass and the existence of
Lyapunov functional. These two properties play important roles both
in Cahn-Hilliard equation's mathematical theoretical analysis and
its numerical analysis. They are used to estimate the absolute
pointwise maximum value of the solution. However, for problem
(\ref{1-1})-(\ref{1-3}), the two important properties might not be existent.
This means that we should find another useful approach to estimate
the absolute pointwise maximum value of the solution.
\end{remark}

We now consider the Fourier spectral method for the problem
(\ref{1-1})-(\ref{1-3})., the existence of a solution locally in
time is proved by the standard Picard iteration, global classical
existence results can be found in \cite{Zhao2}. Adjusted to our
needs, the results is given in the following form:
\begin{theorem}\label{thm1.1}
Suppose that $u_0\in H^2(\Omega)\bigcap H_0^1(\Omega)$, Then problem
(\ref{1-1})-(\ref{1-3}) admits a unique solution $u$ such that
$$
u\in L^2([0,T];H^4(\Omega))\cap
L^\infty([0,T];H^2(\Omega)),\quad \forall T>0,
$$
\end{theorem}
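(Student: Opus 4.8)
The plan is to combine local-in-time existence via Picard iteration (already invoked in the text) with global a priori estimates and a standard continuation argument, the global framework being that of \cite{Zhao2}; uniqueness is then obtained by an energy estimate on the difference of two solutions. All differential identities below are first derived on the smooth approximate (Picard/Galerkin) solutions and then passed to the limit, so the manipulations are justified. The central difficulty, already highlighted in the Remark, is that the convective term destroys both mass conservation and the Lyapunov structure, so the $H^2$ bound cannot be read off from a decreasing energy; instead it must be extracted from a hierarchy of estimates closed by Gronwall.

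First I would derive the $L^2$ estimate. Multiplying \eqref{1-1} by $u$, integrating over $\Omega$ and using \eqref{1-2} to integrate by parts, one gets
\begin{equation}
\frac{1}{2}\frac{d}{dt}\|u\|^2 + \gamma\|\Delta u\|^2 + 3\gamma_2\int_\Omega u^2|\nabla u|^2\,dx = \int_\Omega|\nabla u|^2\,dx - 2\gamma_1\int_\Omega u|\nabla u|^2\,dx,
\end{equation}
where the key point is that the conservative convective term contributes $\int_\Omega u\,\nabla\cdot\psi(u)\,dx = -\int_\Omega\nabla u\cdot\psi(u)\,dx = 0$, since $\nabla u\cdot\psi(u)$ is a perfect divergence and $u$ vanishes on $\partial\Omega$, while the leading cubic term $-3\gamma_2\int u^2|\nabla u|^2$ has a favorable sign. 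The term $-2\gamma_1\int u|\nabla u|^2$ is absorbed by Young's inequality into $3\gamma_2\int u^2|\nabla u|^2$ plus a multiple of $\|\nabla u\|^2$, and $\|\nabla u\|^2$ is controlled by $\varepsilon\|\Delta u\|^2+C_\varepsilon\|u\|^2$. This yields $\frac{d}{dt}\|u\|^2+\gamma\|\Delta u\|^2\le C\|u\|^2$, whence Gronwall gives $u\in L^\infty(0,T;L^2)$ and, after integrating in time, the crucial bound $\int_0^T\|\Delta u\|^2\,dt\le K(T)<\infty$, i.e. $u\in L^2(0,T;H^2)$.

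Next I would test \eqref{1-1} with $\Delta^2 u$. Using \eqref{1-2} the left side produces $\tfrac12\frac{d}{dt}\|\Delta u\|^2+\gamma\|\Delta^2 u\|^2$, while the right side is bounded by $\tfrac{\gamma}{2}\|\Delta^2 u\|^2 + C(\|\Delta\varphi(u)\|^2+\|\nabla\cdot\psi(u)\|^2)$. Writing $\Delta\varphi(u)=\varphi'(u)\Delta u+\varphi''(u)|\nabla u|^2$ and $\nabla\cdot\psi(u)$ in terms of $u\nabla u$, the nonlinear norms are estimated by the two-dimensional embeddings $H^2\hookrightarrow L^\infty$, $H^1\hookrightarrow L^4$ together with Agmon's inequality $\|u\|_{L^\infty}^2\le C\|u\|\,\|u\|_{H^2}$ and the Gagliardo--Nirenberg inequality $\|\nabla u\|_{L^4}^2\le C\|\nabla u\|\,\|\nabla^2u\|$; since $\|u\|$ is already bounded by Step~1, the worst term satisfies $\|\varphi'(u)\Delta u\|^2\lesssim \|u\|_{L^\infty}^4\|\Delta u\|^2\le C(y^2+y)$ with $y:=\|\Delta u\|^2$, and the remaining terms are of lower order. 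This gives a differential inequality of the form $\frac{dy}{dt}\le \phi(t)\,y + C$ with $\phi:=Cy+C$. The main obstacle is precisely the super-linear (cubic) growth of $\phi$; it is overcome by observing that $\int_0^T\phi\,dt = C\int_0^T\|\Delta u\|^2\,dt + CT\le CK(T)+CT$ is finite thanks to the $L^2(0,T;H^2)$ bound from Step~1. Gronwall then yields $y\in L^\infty(0,T)$, i.e. $u\in L^\infty(0,T;H^2)$, and integrating the inequality once more gives $\int_0^T\|\Delta^2 u\|^2\,dt<\infty$, i.e. $u\in L^2(0,T;H^4)$. These a priori bounds rule out finite-time blow-up and, through the continuation argument, upgrade the local Picard solution to a global one.

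Finally, for uniqueness, let $u,v$ be two solutions with the same data and $w=u-v$. Subtracting the equations, testing with $w$ and integrating by parts gives $\tfrac12\frac{d}{dt}\|w\|^2+\gamma\|\Delta w\|^2 = \int_\Omega(\varphi(u)-\varphi(v))\Delta w\,dx - \int_\Omega(\psi(u)-\psi(v))\cdot\nabla w\,dx$. Because $\varphi$ and $\psi$ are polynomials and $u,v\in L^\infty(0,T;H^2)\hookrightarrow L^\infty(0,T;L^\infty(\Omega))$, the increments obey $|\varphi(u)-\varphi(v)|\le C|w|$ and $|\psi(u)-\psi(v)|\le C|w|$ with $C$ depending on the already-established bounds; hence the right side is bounded by $\gamma\|\Delta w\|^2+C\|w\|^2$ after Young's inequality and the interpolation $\|\nabla w\|\le\varepsilon\|\Delta w\|+C_\varepsilon\|w\|$. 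This leaves $\frac{d}{dt}\|w\|^2\le C\|w\|^2$, and since $w(0)=0$, Gronwall forces $w\equiv0$, proving uniqueness.
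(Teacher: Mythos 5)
Your proposal is correct in outline, but be aware that the paper itself contains no proof of Theorem \ref{thm1.1} to compare against: local existence is merely asserted via Picard iteration and the global result is imported from \cite{Zhao2}, the theorem being restated ``adjusted to our needs.'' The fair comparison is therefore with Lemmas \ref{thm2.1}--\ref{thm2.3}, where the paper carries out exactly this kind of energy argument, but for the semi-discrete spectral solution $u_N$. Your Step 1 coincides with Lemma \ref{thm2.1}: the same cancellation of the convective term tested against $u$, the same handling of $\varphi'$ (the paper uses the pointwise bound $\varphi'\ge -c_0$ where you use Young's inequality against the good cubic term $3\gamma_2\int u^2|\nabla u|^2$ --- equivalent), and the same Gronwall output, a uniform $L^2$ bound plus $\int_0^T\|\Delta u\|^2\,dt<\infty$. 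After that the routes genuinely diverge: the paper climbs a ladder $L^2\to H^1\to H^2$, inserting an intermediate $H^1$ estimate (Lemma \ref{thm2.2}, testing with $\Delta u_N$ and using Nirenberg interpolation so that every differential inequality stays \emph{linear} with constant forcing), and only then doing the $\Delta^2$ estimate (Lemma \ref{thm2.3}), where the uniform $H^1$ bound and the resulting $L^p$ bounds of Remark \ref{rem2.1} make all nonlinear terms absorbable with bounded right-hand side. You skip the $H^1$ rung entirely and accept a \emph{quadratic} differential inequality $y'\le Cy^2+Cy+C$ for $y=\|\Delta u\|^2$, closing it with the integrable-coefficient Gronwall lemma, the integrability $\int_0^T y\,dt<\infty$ being precisely the $L^2_tH^2_x$ information from Step 1. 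Both schemes are valid; yours is shorter and is the standard 2D device, while the paper's ladder yields time-uniform bounds at each regularity level with more explicit constants, which it then reuses (Remarks \ref{rem2.1}--\ref{rem2.2}) for the pointwise bounds needed in its error analysis. If your sketch is to stand as a proof, make two points explicit: (i) testing with $\Delta^2 u$ is legitimate at the approximation level because the sine basis diagonalizes $\Delta^2$, so $\Delta^2 u_N\in S_N$ and all boundary terms in the integrations by parts vanish under (\ref{1-2}); (ii) passing from bounds on $\|\Delta u\|$ and $\|\Delta^2 u\|$ to the asserted $H^2$ and $H^4$ memberships uses elliptic regularity for the Navier problem ($u=\Delta u=0$ on $\partial\Omega$), which in this sine-series setting follows from Parseval but should be said.
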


This paper is organized as follows. In the next section, we consider
a semi-discrete Fourier spectral approximation, prove its existence
and uniqueness of the numerical solution and derive the error bound.
In section 3, we consider the full-discrete approximation for
problem (\ref{1-1})-(\ref{1-3}). Furthermore, we prove convergence
to the solution of the associated continuous problem. In the last
section, some numerical experiments which confirm our results are
performed.

Throughout this paper, we denote $L^2$, $L^p$, $L^{\infty}$, $H^k$
norm in $\Omega$ simply by $\|\cdot\|$, $\|\cdot\|_p$,
$\|\cdot\|_{\infty}$ and $\|\cdot\|_{H^k}$.

\section{Semi-discrete approximation}
In this section, we consider the semi-discrete approximation for
problem (\ref{1-1})-(\ref{1-3}).
 First of all, we recall some basic
results on the Fourier spectral method which will be used throughout
this paper. Let $L_1=L_2=2\pi$, $2N_1$, $2N_2$ be any positive integers. In the  continuation of this work, let $N_1=N_2=N$, $h=\frac{\pi}{N}$, $x_i=ih$, $y_j=jh$, $i,j\in \Lambda$, where $\Lambda=1,2,\cdots, 2N$. For any integer $2N>0$, we introduce the finite
dimensional subspace of $H^2(\Omega)\bigcap H_0^1(\Omega)$:
$$
S_N=\hbox{span}\left\{\sin k_1x\sin k_2y,\quad k_1,k_2\in\Lambda \right\},
$$
 Let $P_N:L^2(\Omega)\rightarrow S_N$
be an orthogonal projecting operator which satisfies:
\begin{eqnarray}
(u-P_Nu,v)=0,~~\forall v\in S_N. \label{2-1}
\end{eqnarray}
For operator $P_N$ and functions in $S_N$, we have the following
results (see \cite{Canuto, Ye1,HYN}).

(B1) $P_N$ commutes with derivation on $H^2$, i. e.
$$
P_N\Delta u=\Delta P_Nu,~~\forall u\in H^2(\Omega)\bigcap H_0^1(\Omega).
$$

(B2) For any real $0\leq\mu\leq\sigma$, there is a constant $c$,
such that
$$
\|u-P_Nu\|_{\mu}\leq cN^{\mu-\sigma}\|\nabla^{\sigma}u\|,~~\forall
u\in H^{\sigma}(\Omega).
$$

 We define the
Fourier spectral approximation: For each $N\geq1$, find
$$u_N(t)=\sum_{j=1}^Na_j(t)\sin k_1x\sin k_2y\in S_N$$ such that $\forall
v_N\in S_N$,
\begin{eqnarray}\label{2-2}
&&(\frac{\partial u_N}{\partial t},v_N)+\gamma(\Delta u_{N}, \Delta
v_{N})=(\varphi(u_N), \Delta v_{N})+(\nabla\cdot\psi(u_N), v_N),
\end{eqnarray}
for all $t\in[0,T]$ with $u_N(0)=P_Nu_0$.

Now, we are going to establish the existence, uniqueness et. al. of
the Fourier spectral approximation solution $u_N(t)$ for $t\geq 0$.
\begin{lemma}
Suppose that $u_0\in L^2(\Omega)$. Then, (\ref{2-2}) has a unique
solution $u_N(t)$ satisfying the following inequalities:
\begin{eqnarray}
\label{2-3}\|u_N(t)\|^2\leq e^{c_1t}\|u_0\|^2,~~\forall t\in(0,T),
\end{eqnarray}
and \begin{eqnarray} \label{2-3-1} \int_0^t\|\Delta
u_N(\tau)\|^2d\tau\leq\left(\frac{c_1t}{\gamma}e^{c_1t}+\frac1{\gamma}\right)\|u_0\|^2,~~t\in(0,T),
\end{eqnarray}
 where $c_1$ is a positive constant depends only on $\gamma$,
$\gamma_1$, $\gamma_2$ and the domain. \label{thm2.1}
\end{lemma}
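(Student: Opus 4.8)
The plan is to treat \eqref{2-2} as a finite-dimensional ODE system for the coefficients $a_j(t)$ and then to close a single energy estimate. Since $u_N\in S_N$ is determined by the vector $(a_1,\dots,a_N)$ and the basis $\{\sin k_1x\sin k_2y\}$ is $L^2$-orthogonal, letting $v_N$ range over the basis functions turns \eqref{2-2} into $\dot a_j=F_j(a)$, where each $F_j$ is a polynomial in $a$ (the cubic $\varphi$ and quadratic $\psi$ only produce $L^2$ inner products against the basis). Such a right-hand side is locally Lipschitz, so Picard--Lindel\"of yields a unique solution on a maximal interval $[0,t_N)$. Global existence on $[0,T]$ then follows once the a priori bound \eqref{2-3} is in hand, because in finite dimensions $\|u_N(t)\|$ controls $|a(t)|$, so the solution cannot escape to infinity in finite time.

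For the a priori estimate I would set $v_N=u_N$ in \eqref{2-2}, giving
$$\frac12\frac{d}{dt}\|u_N\|^2+\gamma\|\Delta u_N\|^2=(\varphi(u_N),\Delta u_N)+(\nabla\cdot\psi(u_N),u_N).$$
Every boundary term produced by integration by parts vanishes because functions in $S_N$ are zero on $\partial\Omega$. This has three useful consequences. First, the convective term drops out entirely, since $(\nabla\cdot\psi(u_N),u_N)=-\tfrac13\oint u_N^3(\cdots)\,ds=0$. Second, the leading cubic part of $\varphi$ is dissipative: $\gamma_2(u_N^3,\Delta u_N)=-3\gamma_2\int_\Omega u_N^2|\nabla u_N|^2\le0$. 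Third, $\gamma_1(u_N^2,\Delta u_N)=-2\gamma_1\int_\Omega u_N|\nabla u_N|^2$ and $-(u_N,\Delta u_N)=\|\nabla u_N\|^2$.

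The heart of the argument---and the step I expect to be the main obstacle---is controlling the indefinite quadratic contribution $-2\gamma_1\int_\Omega u_N|\nabla u_N|^2$, precisely the term for which, as the Remark notes, neither mass conservation nor a Lyapunov functional is available. The idea is to spend the favorable cubic dissipation on it via Young's inequality: from $2|\gamma_1|\,|u_N|\,|\nabla u_N|^2\le 3\gamma_2\,u_N^2|\nabla u_N|^2+\tfrac{\gamma_1^2}{3\gamma_2}|\nabla u_N|^2$, integration cancels the $3\gamma_2\int u_N^2|\nabla u_N|^2$ term against the cubic dissipation, leaving only $\tfrac{\gamma_1^2}{3\gamma_2}\|\nabla u_N\|^2$. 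Collecting this with the $\|\nabla u_N\|^2$ from the linear term and interpolating through $\|\nabla u_N\|^2=-(\Delta u_N,u_N)\le\|\Delta u_N\|\,\|u_N\|\le\tfrac{\gamma}{2C_0}\|\Delta u_N\|^2+\tfrac{C_0}{2\gamma}\|u_N\|^2$ with $C_0=1+\tfrac{\gamma_1^2}{3\gamma_2}$, half of the $\gamma\|\Delta u_N\|^2$ dissipation absorbs the top-order part, and one arrives at
$$\frac{d}{dt}\|u_N\|^2+\gamma\|\Delta u_N\|^2\le c_1\|u_N\|^2,\qquad c_1=\tfrac1\gamma\big(1+\tfrac{\gamma_1^2}{3\gamma_2}\big)^2.$$

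Finally, dropping the nonnegative $\gamma\|\Delta u_N\|^2$ and applying Gronwall's inequality together with $\|u_N(0)\|=\|P_Nu_0\|\le\|u_0\|$ yields \eqref{2-3}. To obtain \eqref{2-3-1} I would instead integrate the displayed differential inequality over $[0,t]$, keep the $\gamma\int_0^t\|\Delta u_N\|^2\,d\tau$ term on the left, and bound the right-hand side using $\|u_N(\tau)\|^2\le e^{c_1\tau}\|u_0\|^2\le e^{c_1t}\|u_0\|^2$; dividing by $\gamma$ gives exactly the stated constant $\tfrac{c_1t}{\gamma}e^{c_1t}+\tfrac1\gamma$. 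In this route $c_1$ depends only on $\gamma,\gamma_1,\gamma_2$, the domain entering (if at all) only through the fixed interpolation/Poincar\'e constants implicit in the step $\|\nabla u_N\|^2\le\|\Delta u_N\|\,\|u_N\|$; an alternative that makes the domain dependence explicit would estimate the nonlinear terms by the two-dimensional Gagliardo--Nirenberg inequality instead.
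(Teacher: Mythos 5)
Your proposal is correct and follows essentially the same route as the paper: reduce \eqref{2-2} to a locally Lipschitz ODE system for the coefficients, test with $v_N=u_N$, kill the convective term by integration by parts, absorb the nonlinearity into $\frac{\gamma}{2}\|\Delta u_N\|^2+\frac{c_0^2}{2\gamma}\|u_N\|^2$ via $\|\nabla u_N\|^2=-(u_N,\Delta u_N)$ and Young, then apply Gronwall and integrate the same differential inequality for \eqref{2-3-1}. Your monomial-by-monomial treatment of $(\varphi(u_N),\Delta u_N)$ (spending the cubic dissipation on the quadratic term) is algebraically identical to the paper's pointwise bound $\varphi'(s)\geq -c_0=-\frac{\gamma_1^2}{3\gamma_2}-1$, and yields the same constant $c_1=c_0^2/\gamma$.
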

\begin{proof}
Set $v_N=\sin j_1x\sin j_2y$ in (\ref{2-2}) for each $j~(1\leq j\leq N)$ to
obtain
\begin{eqnarray}
\label{2-4} \frac
d{dt}a_j(t)=f_j(a_1(t),a_2(t),\cdots,a_J(t)),~~j=1,2,\cdots,N,
\end{eqnarray}
where all $f_j:\mathbb{R}^N\rightarrow \mathbb{R}~(1\leq j\leq N)$
are smooth and locally Lipschitz continuous. Noticing that
$u_N(0)=P_Nu_0$, then
\begin{eqnarray}
\label{2-5} a_j(0)=(u_0,\phi_j),~~j=1,2,\cdots,N.
\end{eqnarray}
Using the theory of initial-value problems of the ordinary
differential equations, there is a time $T_N>0$ such that the
initial-value problem (\ref{2-4})-(\ref{2-5}) has a unique smooth
solution $(a_1(t),a_2(t),\cdots,a_N(t))$ for $t\in[0,T_N]$.

Setting $v_N=u_N$ in (\ref{2-2}), we obtain
$$
\frac12\frac d{dt}\|u_N\|^2+\gamma\|\Delta
u_N\|^2=(\varphi(u_N),\Delta u_N)+(\nabla\cdot\psi(u_N),u_N).
$$
Note that
$$
\varphi'(u_N)=3\gamma_2u_N^2+2\gamma_1u_N-u_N\geq-c_0=-\frac{\gamma_1^2}{3\gamma_2}-1.
$$
Thus
\begin{eqnarray}
(\varphi(u_N),\Delta u_N)&=&-(\varphi'(u_N)\nabla u_N,\nabla
u_N)\leq c_0\|\nabla u_N\|^2 \nonumber \\
&=&-c_0(u_N,\Delta u_N)\leq \frac{\gamma}2\|\Delta
u_N\|^2+\frac{c_0^2}{2\gamma}\|u_N\|^2.\nonumber
\end{eqnarray}
On the other hand, a simple calculation shows that
$$
(\nabla\cdot\psi(u_N),u_N)=\int_{\Omega}\nabla\cdot(u_N^2)u_Ndx=0,
$$
Summing up, we get
\begin{eqnarray}
\label{2-3-1-1} \frac d{dt}\|u_N\|^2+\gamma\|\Delta
u_N\|^2\leq\frac{c_0^2}{\gamma}\|u_N\|^2.
\end{eqnarray}
Using Gronwall's inequality, we obtain
$$
\|u_N\|^2\leq e^{\frac{c_0^2}{\gamma}t}\|u_N(0)\|^2\leq
e^{\frac{c_0^2}{\gamma}t}\|u_0\|^2,~~t\in(0,T).
$$
Setting $c_1=\frac{c_0^2}{\gamma}$, we get the conclusion
(\ref{2-3}). Integrating (\ref{2-3-1-1}) from $0$ to $t$, we get
\begin{equation}
\begin{aligned}
\int_0^t\|\Delta u_N(\tau)\|^2d\tau\leq&
\frac1{\gamma}\left(c_1\int_0^t\|u_N(\tau)\|^2d\tau+\|u_{N}(0)\|^2\right)\\\leq&
\left(\frac{c_1t}{\gamma}e^{c_1t}+\frac1{\gamma}\right)\|u_0\|^2.
\nonumber\end{aligned}\end{equation}
Hence, Lemma \ref{thm2.1} is proved.
\end{proof}

\begin{lemma}
Suppose that $u_0\in H^1_{0}(\Omega)$. Then, (\ref{2-2}) has a
unique solution $u_N(t)$ satisfying
\begin{eqnarray}
\label{2-6}\| \nabla u_N(t)\|^2\leq e^{c_2t}\|\nabla u_0\|^2+\tilde{c_2},~~t\in(0,T),
\end{eqnarray}
and
\begin{eqnarray}
\label{2-6-1} \int_0^t\|\nabla\Delta
u_N(\tau)\|^2d\tau\leq\frac{\tilde{c_3}t}{\gamma}+\frac1{\gamma}\|\nabla
u_0\|^2,~~t\in(0,T),
\end{eqnarray}
 where $c_2$ is a positive constant depends only on $\gamma$,
$\gamma_1$, $\gamma_2$ and the domain.\label{thm2.2}
\end{lemma}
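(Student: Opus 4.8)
The plan is to repeat the energy argument of Lemma~\ref{thm2.1}, but now to test the weak formulation (\ref{2-2}) against $v_N=-\Delta u_N$, which is an admissible choice because $\Delta$ maps $S_N$ into itself. Existence and uniqueness of $u_N(t)$ are inherited verbatim from the ODE system (\ref{2-4})--(\ref{2-5}), so the genuine content is the \emph{a priori} bound, which will also force the local solution to be global. Since every element of $S_N$ and its Laplacian vanish on $\partial\Omega$, all boundary terms disappear upon integration by parts: the time-derivative term becomes $\tfrac12\frac{d}{dt}\|\nabla u_N\|^2$ and the biharmonic term becomes $\gamma\|\nabla\Delta u_N\|^2$, giving the identity
\begin{equation}
\tfrac12\tfrac{d}{dt}\|\nabla u_N\|^2+\gamma\|\nabla\Delta u_N\|^2=(\varphi(u_N),-\Delta^2 u_N)+(\nabla\cdot\psi(u_N),-\Delta u_N).\nonumber
\end{equation}
The positive dissipation $\gamma\|\nabla\Delta u_N\|^2$ is the quantity that must dominate the right-hand side.

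First I would dispose of the convective term. Because $\psi(u_N)=u_N^2$ is quadratic, $\nabla\cdot\psi(u_N)$ is of the form $u_N\nabla u_N$, so by H\"older together with the 2D Gagliardo--Nirenberg inequalities $\|u_N\|_{L^4}\le c\|u_N\|^{1/2}\|\nabla u_N\|^{1/2}$ and $\|\nabla u_N\|_{L^4}\le c\|\nabla u_N\|^{1/2}\|\Delta u_N\|^{1/2}$, and using $\|\Delta u_N\|^2\le\|\nabla u_N\|\,\|\nabla\Delta u_N\|$, this pairing against $\Delta u_N$ reduces to a power of $\|\nabla\Delta u_N\|$ strictly below $2$. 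Young's inequality then sends part of it into $\tfrac{\gamma}{4}\|\nabla\Delta u_N\|^2$ and the rest into lower-order quantities already controlled by (\ref{2-3}) and (\ref{2-3-1}).

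The crux, and the main obstacle, is the term $(\varphi(u_N),-\Delta^2 u_N)$. Integrating by parts once rewrites it as $(\varphi'(u_N)\nabla u_N,\nabla\Delta u_N)$, whose dominant piece is the cubic contribution $3\gamma_2(u_N^2\nabla u_N,\nabla\Delta u_N)$. Since (\ref{1-1})--(\ref{1-3}) possesses neither a maximum principle nor a Lyapunov functional (cf.\ the Remark), one cannot simply bound $\|u_N\|_{\infty}$; instead I would estimate $\|\varphi'(u_N)\nabla u_N\|$ by H\"older and the 2D Gagliardo--Nirenberg inequalities, e.g.\ $\|u_N\|_{L^8}\le c\|\nabla u_N\|^{3/4}\|u_N\|^{1/4}$ and $\|\nabla u_N\|_{L^4}\le c\|\nabla u_N\|^{1/2}\|\Delta u_N\|^{1/2}$, and then absorb a factor $\tfrac{\gamma}{4}\|\nabla\Delta u_N\|^2$ via Young. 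A cleaner alternative is to integrate by parts twice, using $\varphi'(u_N)\ge -c_0$ to convert the top-order piece into $c_0\|\Delta u_N\|^2$, which is time-integrable by (\ref{2-3-1}); in either route the $L^2$-bound (\ref{2-3}) is indispensable for keeping the surviving factors under control. The delicate point is to keep every Gagliardo--Nirenberg application strictly subcritical, so that $\|\nabla\Delta u_N\|^2$ is always absorbable and the closing estimate stays \emph{linear} in $\|\nabla u_N\|^2$.

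Once these reductions are carried out, the right-hand side is bounded by $c_2\|\nabla u_N\|^2$ plus terms whose time integrals are controlled by (\ref{2-3}) and (\ref{2-3-1}), yielding a differential inequality $\frac{d}{dt}\|\nabla u_N\|^2\le c_2\|\nabla u_N\|^2+(\text{integrable data})$. Gronwall's inequality then produces (\ref{2-6}), with the additive constant $\tilde c_2$ collecting the contributions of the $L^2$-data. Finally, integrating the full identity over $[0,t]$, dropping the nonnegative term $\tfrac12\|\nabla u_N(t)\|^2$ on the left, and estimating the time integral of the right-hand side by (\ref{2-6}) together with (\ref{2-3}) and (\ref{2-3-1}), gives $\gamma\int_0^t\|\nabla\Delta u_N(\tau)\|^2\,d\tau\le \tfrac12\|\nabla u_0\|^2+\tilde c_3 t$, which is precisely (\ref{2-6-1}) after dividing by $\gamma$.
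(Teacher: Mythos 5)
Your overall route is the same as the paper's: test (\ref{2-2}) with $v_N=\Delta u_N$ (up to sign; admissible since $\Delta S_N\subset S_N$), obtain the identity $\tfrac12\tfrac{d}{dt}\|\nabla u_N\|^2+\gamma\|\nabla\Delta u_N\|^2=-(\Delta\varphi(u_N),\Delta u_N)-(\nabla\cdot\psi(u_N),\Delta u_N)$, and close with Gagliardo--Nirenberg, Young and Gronwall; the existence/uniqueness part is inherited from the ODE system exactly as in the paper. The genuine gap is at the one point you yourself label ``delicate'' and then defer: with the inequalities you actually specify, the Young remainders are \emph{superlinear} in $\|\nabla u_N\|^2$, so the inequality $\frac{d}{dt}\|\nabla u_N\|^2\le c_2\|\nabla u_N\|^2+(\text{integrable data})$ that your Gronwall step needs is not what your estimates produce. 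Concretely, for the cubic piece your chain gives
\begin{equation}
3\gamma_2\bigl|\bigl(u_N^2\nabla u_N,\nabla\Delta u_N\bigr)\bigr|\le c\|u_N\|_{L^8}^2\|\nabla u_N\|_{L^4}\|\nabla\Delta u_N\|\le c\|u_N\|^{\frac12}\|\nabla u_N\|^{2}\|\Delta u_N\|^{\frac12}\|\nabla\Delta u_N\|,\nonumber
\end{equation}
and after absorbing $\frac{\gamma}{4}\|\nabla\Delta u_N\|^2$, Young leaves $c\|u_N\|\|\Delta u_N\|\,\|\nabla u_N\|^{4}=a(t)\bigl(\|\nabla u_N\|^2\bigr)^{2}$, where (\ref{2-3}) and (\ref{2-3-1}) only give $a\in L^1(0,T)$. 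That is a Riccati-type inequality, which admits finite-time blow-up once $\|\nabla u_0\|^2\int_0^t a$ reaches $1$, so no bound of the form (\ref{2-6}) follows for large data. The convective term fares the same way: your splitting yields $c\|u_N\|^{1/2}\|\nabla u_N\|^{7/4}\|\nabla\Delta u_N\|^{3/4}$, whose Young remainder $c\|u_N\|^{4/5}\|\nabla u_N\|^{14/5}$ is again superlinear and is \emph{not} ``controlled by (\ref{2-3}) and (\ref{2-3-1})''. Your ``cleaner alternative'' ($\varphi'\ge-c_0$ after two integrations by parts) disposes only of $-\int\varphi'(u_N)|\Delta u_N|^2dx$; the piece $-\int\varphi''(u_N)|\nabla u_N|^2\Delta u_N\,dx$ survives, and a scaling count shows it is \emph{exactly} critical in 2D: any H\"older/Gagliardo--Nirenberg splitting of it against the pair $\|u_N\|$, $\|\nabla\Delta u_N\|$ produces the dissipation to the power exactly $2$, so ``strictly subcritical'' cannot be achieved for this term by exponent bookkeeping alone.

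What is missing is a device that puts the nonlinear weight on quantities with bounded time integral while keeping the power of $\|\nabla u_N\|^2$ equal to one. The paper's device is structural: it expands $\Delta\varphi(u_N)$, keeps $\gamma_2\|u_N\Delta u_N\|^2$ on the left as a good term, and interpolates every remainder only between $\|u_N\|$ (bounded by Lemma \ref{thm2.1}) and $\|\nabla\Delta u_N\|$, never against $\|\nabla u_N\|$ or $\|\Delta u_N\|$, so that its Young remainders are constants. An alternative repair of your route is Agmon's inequality $\|u_N\|_\infty^2\le c\|u_N\|\bigl(\|u_N\|+\|\Delta u_N\|\bigr)$, which yields
\begin{equation}
\bigl|\bigl(u_N^2\nabla u_N,\nabla\Delta u_N\bigr)\bigr|\le\frac{\gamma}{4}\|\nabla\Delta u_N\|^2+c\|u_N\|^2\bigl(\|u_N\|^2+\|\Delta u_N\|^2\bigr)\|\nabla u_N\|^2,\nonumber
\end{equation}
i.e.\ a term linear in $\|\nabla u_N\|^2$ with an $L^1(0,T)$ coefficient, to which Gronwall does apply and gives (\ref{2-6})--(\ref{2-6-1}). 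Be aware, finally, that the paper is itself not immune at this point: its step $\|\nabla u_N\|_4^4\le\frac{\gamma}{8}\|\nabla\Delta u_N\|^2+c_3$ silently discards a factor $\|u_N\|^2$ in an exactly critical interpolation and is valid only under a smallness assumption; so your instinct about where the difficulty sits is correct, but a complete proof must resolve it rather than postpone it.
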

\begin{proof} Setting $v_N=\Delta u_{N}$ in (\ref{2-2}), we obtain
\begin{equation}
\frac12\frac{d}{dt}\|\nabla u_N\|^2+\gamma\|\nabla\Delta
u_N\|^2=-\int_{\Omega}\Delta \varphi(u_N)\Delta
u_Ndx-\int_{\Omega}\nabla\cdot\psi(u_N)\Delta u_Ndx. \nonumber
\end{equation}
Note that
$$
\Delta \varphi(u_N)= (3\gamma_2u_N^2+2\gamma_1u_N-1)\Delta
 u_N+(6\gamma_2u_N+2\gamma_1)|\nabla u_N|^2.
$$
Hence 
\begin{eqnarray}
&&\frac12\frac{d}{dt}\|\nabla u_N\|^2+\gamma \|\nabla\Delta
u_N\|^2+\gamma_2\|u_N\Delta u_N\|^2 \nonumber
\\
&=&-\int_{\Omega}(2\gamma_2 u_N^2+2\gamma_1 u_N-1)|\Delta u_N|^2dx+\int_{\Omega}u_N^2\nabla\Delta u_Ndx\nonumber\\
&&-\int_{\Omega}2\gamma_1|\nabla u_N|^2\Delta
u_Ndx-6\int_{\Omega}\gamma_2u_N|\nabla u_N|^2\Delta u_Ndx \nonumber
\\
&\leq& \gamma_2\int_{\Omega}u_N^2|\Delta
u_N|^2dx+c\int_{\Omega}|\nabla u_N|^4dx+c\int_{\Omega}|\Delta
u_N|^2dx \nonumber
\\
&&+\frac{\gamma}{8}\int_{\Omega}|\nabla\Delta
u_N|^2dx+c\int_{\Omega}u_N^4dx. \nonumber
\end{eqnarray}
 On the other hand, by Nirenberg's inequality, we have
 \begin{eqnarray}
 \|\nabla u_N\|_4^4\leq \left(c'_1\|\nabla\Delta
 u_N\|^{\frac{1}{2}}\|u_N\|^{\frac{1}{2}}+c'_2\|u_N\|\right)^4\leq \frac{\gamma}{8}\|\nabla\Delta
u_N\|^2+c_3, \nonumber
\end{eqnarray}
\begin{eqnarray}
 \|u_N\|^4_4\leq \left(c'_1\|\nabla\Delta
 u_N\|^{\frac{1}{6}}\|u_N\|^{\frac56}+c'_2\|u_N\|\right)^4\leq \frac{\gamma}{8}\|\nabla\Delta
u_N\|^2+c_4,
 \nonumber
 \end{eqnarray}
 and
 \begin{eqnarray}
 \|\Delta u_N\|^2\leq\left(c'_1\|\nabla\Delta
 u_N\|^{\frac23}\|u_N\|^{\frac13}+c'_2\|u_N\|\right)^2\leq\frac{\gamma}{8}\|\nabla\Delta
u_N\|^2+c_5. \nonumber
\end{eqnarray}
Summing up, we immediately obtain
\begin{eqnarray}\label{2-6-1-1}
\frac d{dt}\|\nabla u_N\|^2+\gamma\|\nabla\Delta u_N\|^2\leq
2(c_3+c_4+c_5).
\end{eqnarray}
Using Nirenberg's inequality again, we get
\begin{equation}\label{ax}
\|\nabla u_N\|\leq c'_1\|\nabla\Delta u_N\|^{\frac13}\|u_N\|^{\frac23}+c'_2\|u_N\|.
\end{equation}
Adding (\ref{2-6-1-1}) and (\ref{ax}) together gives
\begin{eqnarray}\label{2-6-1-11}
\frac d{dt}\|\nabla u_N\|^2+c'_3\|\nabla u_N\|^2\leq
c_4'.
\end{eqnarray}
Therefore, Gronwall's inequality shows that
$$
\|\nabla u_N\|^2\leq e^{c'_3t}\|\nabla u_N(0)\|^2+\frac{c'_4}{c'_3}.
$$
Setting $c_2=c'_3$, $\tilde{c_2}=\frac{c'_4}{c'_3}$, we get the conclusion (\ref{2-6}).
Integrating (\ref{2-6-1-1}) from $0$ to $t$, we deduce that
$$
\int_0^t\|\nabla\Delta
u_N(\tau)\|^2d\tau\leq\frac2{\gamma}(c_3+c_4+c_5)t+\frac1{\gamma}\|\nabla
u_N(0)\|^2.
$$
Setting $\tilde{c_3}=2(c_3+c_4+c_5)$, we obtain (\ref{2-6-1}). Lemma \ref{thm2.2} is proved.
\end{proof}
\begin{remark}
\label{rem2.1} Based on Lemmas \ref{thm2.1}-\ref{thm2.2}, we obtain
the $H^1$-norm estimate of the numerical solution $u_N(t)$ for
problem (\ref{2-2}). Noticing that we consider the problem in 2D
case, by Sobolev's embedding theorem, we have
$H^1(\Omega)\rightharpoonup L^p(\Omega)$ for all $p<\infty$. Hence,
$$
\|u_N(t)\|_p\leq c\|u_N(t)\|_{H^1}\leq c_6,~~\forall p\in]1,\infty[,
$$
 where $c_6$ is a positive constant depends only on $\gamma$,
$\gamma_1$, $\gamma_2$ and the domain.
\end{remark}
\begin{lemma}
Suppose that $u_0\in H^2(\Omega)\bigcap H_0^1(\Omega)$. Then, (\ref{2-2}) has a
unique solution $u_N(t)$ satisfying
\begin{eqnarray}
\label{2-7}\| \Delta u_N(t)\|^2\leq e^{c_7t}\|\Delta u_0\|^2+c_8,~~t\in(0,T),
\end{eqnarray}
and
\begin{eqnarray}
\label{2-7-1} \int_0^t\|\Delta ^2u_N(\tau)\|^2d\tau\leq
\tilde{c}_7t+\tilde{c}_8\|\Delta u_0\|^2,~~t\in(0,T),
\end{eqnarray}
 where $c_7$, $c_8$ are positive constants depend only on $\gamma$,
$\gamma_1$, $\gamma_2$ and the domain.\label{thm2.3}
\end{lemma}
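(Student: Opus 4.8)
\emph{Proof proposal.} The existence and uniqueness of $u_N(t)$ are already secured by the ODE argument in Lemma~\ref{thm2.1}; what remains is the a priori $H^2$ bound, which then rules out finite-time blow-up and yields a global solution. My plan is to mimic the energy method of Lemmas~\ref{thm2.1}--\ref{thm2.2}, but now testing (\ref{2-2}) with $v_N=\Delta^2 u_N\in S_N$ (admissible because $S_N$ is invariant under $\Delta$ and the sine basis satisfies the boundary conditions). Repeated integration by parts, justified by the boundary conditions, converts the left-hand side of (\ref{2-2}) into
\begin{equation}
\tfrac12\tfrac{d}{dt}\|\Delta u_N\|^2+\gamma\|\Delta^2 u_N\|^2, \nonumber
\end{equation}
while the right-hand side becomes $(\Delta\varphi(u_N),\Delta^2 u_N)+(\nabla\cdot\psi(u_N),\Delta^2 u_N)$. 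The strategy is then to bound the two right-hand terms by a small multiple of $\|\Delta^2 u_N\|^2$ (to be absorbed on the left), plus a term proportional to $\|\Delta u_N\|^2$ and an additive constant, so as to land on a differential inequality amenable to Gronwall.

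Next I would insert the expansion already computed in Lemma~\ref{thm2.2}, namely $\Delta\varphi(u_N)=(3\gamma_2u_N^2+2\gamma_1u_N-1)\Delta u_N+(6\gamma_2u_N+2\gamma_1)|\nabla u_N|^2$, and estimate the resulting pieces against $\Delta^2 u_N$ by the Cauchy--Schwarz and H\"older inequalities. The linear piece $-(\Delta u_N,\Delta^2u_N)$ is controlled by $\varepsilon\|\Delta^2u_N\|^2+C\|\Delta u_N\|^2$, and it is precisely this term that produces the exponential factor $e^{c_7t}$ in (\ref{2-7}). The genuinely nonlinear pieces such as $(u_N^2\Delta u_N,\Delta^2u_N)$, $(u_N|\nabla u_N|^2,\Delta^2u_N)$ and the convective contribution $(\nabla\cdot\psi(u_N),\Delta^2u_N)\lesssim\|u_N\|_4\|\nabla u_N\|_8\|\Delta^2u_N\|$ would be handled by first using H\"older to split off factors like $\|u_N\|_{L^8}$, $\|\Delta u_N\|_{L^4}$, $\|\nabla u_N\|_{L^8}$, and then invoking Gagliardo--Nirenberg (Nirenberg's inequality, as already used above) to express each intermediate norm as a fractional power of $\|\Delta^2 u_N\|$ times a lower-order norm. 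Crucially, the lower-order factors $\|u_N\|$, $\|\nabla u_N\|$ and all $\|u_N\|_p$ ($p<\infty$) are uniformly bounded by Lemmas~\ref{thm2.1}--\ref{thm2.2} and Remark~\ref{rem2.1}.

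The main obstacle, exactly as flagged in Remark~1, is that in 2D there is no control of $\|u_N\|_\infty$ by the $H^1$ bound, so one cannot simply pull $u_N$ out in $L^\infty$. The remedy is to rely on the all-finite-$p$ estimate $\|u_N\|_p\le c_6$ from Remark~\ref{rem2.1} together with Gagliardo--Nirenberg; the bookkeeping must be arranged so that, after interpolation, the exponent of $\|\Delta^2 u_N\|$ appearing in every nonlinear term is strictly less than $2$ (for instance $(u_N^2\Delta u_N,\Delta^2u_N)$ yields $\|\Delta^2u_N\|^{13/8}$ and the gradient term yields $\|\Delta^2u_N\|^{15/8}$). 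Young's inequality then converts each such term into $\varepsilon\|\Delta^2u_N\|^2$ plus a constant. Collecting all contributions and choosing $\varepsilon$ small enough to absorb the top-order terms into $\gamma\|\Delta^2u_N\|^2$, I obtain
\begin{equation}
\tfrac{d}{dt}\|\Delta u_N\|^2+\gamma\|\Delta^2 u_N\|^2\le c_7\|\Delta u_N\|^2+c, \nonumber
\end{equation}
whence Gronwall's inequality gives (\ref{2-7}) with $c_8$ depending on $c/c_7$ and the uniform lower-order bounds, and integrating the same differential inequality in time from $0$ to $t$ (after dropping the $\|\Delta^2u_N\|^2$ term on the right using the already-established (\ref{2-7})) yields (\ref{2-7-1}). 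I expect the interpolation/absorption step to be the delicate part, since it is where the 2D borderline Sobolev behaviour forces the use of the finite-$p$ bounds rather than an $L^\infty$ estimate.
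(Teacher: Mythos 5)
Your proposal is correct and takes essentially the same route as the paper's own proof: testing (\ref{2-2}) with $v_N=\Delta^2 u_N$ (the paper writes ``$v_N=\Delta u_N$'' but its displayed identity is precisely the $\Delta^2 u_N$ test), expanding $\Delta\varphi(u_N)$, combining H\"older with the finite-$p$ bounds of Remark~\ref{rem2.1} and Nirenberg interpolation so that every nonlinear term carries a power of $\|\Delta^2 u_N\|$ strictly below $2$, then absorbing via Young's inequality and concluding with Gronwall. The only cosmetic difference is that the paper interpolates the remaining $\|\Delta u_N\|$-type terms down to the bounded $\|\nabla u_N\|$ norm so its differential inequality has a purely constant right-hand side, whereas you retain $c_7\|\Delta u_N\|^2$ and let Gronwall produce the exponential; both yield (\ref{2-7}) and (\ref{2-7-1}).
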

\begin{proof}
Setting $v_N=\Delta u_{N}$ in (\ref{2-2}), we obtain
\begin{equation}
\frac12\frac{d}{dt}\|\Delta
u_N\|^2+\gamma\|\Delta^2u_N\|^2=\int_{\Omega}\Delta^2u_N\Delta
\varphi(u_N)dx+\int_{\Omega}\Delta^2u_N\nabla \cdot\varphi(u_N)dx.
\nonumber
\end{equation}
By H\"{o}lder's inequality, we derive that 
\begin{equation}
\frac12\frac{d}{dt}\|\Delta u_N\|^2+\gamma\|\Delta^2u_N\|^2\leq
\frac1{\gamma}\|\Delta \varphi(u_N)\|^2+\frac1{\gamma}\|\nabla
\psi(u_N)\|^2+\frac{\gamma}{2}\|\Delta^2 u_N\|^2. \nonumber
\end{equation}
Noticing that 
\begin{eqnarray}
&&\|\Delta \varphi(u_N)\|^2 \nonumber\\&\leq&
2(\int_{\Omega}|\varphi'(u_N)|^2|\Delta
u_N|^2dx+\int_{\Omega}|\varphi{''}(u_N)|^2|\nabla u_N|^4dx)
\nonumber
 \\
 &\leq& 2[(\int_{\Omega}|\varphi'(u_N)|^3dx)^{\frac{2}{3}}(\int_{\Omega}|\Delta
 u_N|^6dx)^{\frac{1}{3}}+(\int_{\Omega}|\varphi{''}(u_N)|^6dx)^{\frac{1}{3}}(\int_{\Omega}|\nabla
 u_N|^6dx)^{\frac{2}{3}}]
 \nonumber
 \\
 &\leq &c_9[(\int_{\Omega}|\Delta
 u_N|^6dx)^{\frac{1}{3}}+(\int_{\Omega}|\nabla
 u_N|^6dx)^{\frac{2}{3}}],\nonumber
 \label{3-22}
\end{eqnarray}
 where $c_9$ is a positive constant depends only on $\gamma$,
$\gamma_1$, $\gamma_2$ and the domain. On the other hand, we have 
\begin{eqnarray}
\|\nabla \psi(u_N)\|^2=\int_{\Omega}u_N^2|\nabla u_N|^2dx
\leq\frac{1}{2}\int_{\Omega}u_N^4dx+\frac{1}{2}\int_{\Omega}|\nabla
u_N|^4dx \leq \frac{c_6^4}2+\frac12\|\nabla u_N\|^4_4. \nonumber
\end{eqnarray}
Using Nirenberg's inequality, we have
$$
\|\nabla u_N\|_4^4\leq \left(c'_1\|\Delta^2u_N\|^{\frac16}\|\nabla
u_N\|^{\frac56}+c'_2\|\nabla u_N\|\right)^4\leq
\varepsilon\|\Delta^2u_N\|^2+c_{\varepsilon},
$$
$$
\|\nabla u_N\|_6^4\leq \left(c'_1\|\Delta^2
u_N\|^{\frac{2}{9}}\|\nabla u_N\|^{\frac{7}{9}}+c'_2\|\nabla
u_N\|\right)^4\leq\varepsilon\|\Delta^2u_N\|^2+c_{\varepsilon},
$$and
$$
\|\Delta u_N\|_6^2\leq \left(c'_1\|\Delta^2
u_N\|^{\frac{5}{9}}\|\nabla u_N\|^{\frac{4}{9}}+'_2\|\nabla
u_N\|\right)\leq\varepsilon\|\Delta^2u_N\|^2+c_{\varepsilon}.
$$
Summing up, we derive that
\begin{eqnarray}
\label{2-7-1-1} \frac d{dt}\|\Delta
u_N\|^2+[\gamma-(\frac{4c_9}{\gamma}+\frac1{\gamma})\varepsilon]\|\Delta^2u_N\|^2\leq\frac{4c_9c_{\varepsilon}}{\gamma}+\frac{c_{\varepsilon}}{\gamma}+\frac{c_6^4}{\gamma},
\end{eqnarray}
where $\varepsilon$ is small enough, it satisfies $
\gamma-(\frac{4c_9}{\gamma}+\frac1{\gamma})\varepsilon>0$. By the Calderon-Zygmund type estimate, we get
$$
\frac d{dt}\|\Delta
u_N\|^2+\tilde{c_4}(\|\Delta u_N\|^2+\|\nabla\Delta u_N\|^2)\leq \tilde{c_5}.
$$
Therefore, Gronwall's inequality shows that
$$
\|\Delta u_N\|^2\leq e^{\tilde {c_4}t}\|\Delta u_N(0)\|^2+\frac{\tilde{c_5}}{\tilde{c_4}}.
$$
Setting
$c_7=\tilde{c_4}$, $c_8=\frac{\tilde{c_5}}{\tilde{c_4}}$,
we obtain (\ref{2-7}). Integrating (\ref{2-7-1-1}) form $0$ to $t$,
we obtain
$$
\int_0^t\|\Delta^2u_N(\tau)\|^2d\tau\leq\frac{4c_9c_{\varepsilon}+c_{\varepsilon}+c_6^4}{\gamma^2-(4c_9+1)\varepsilon}t+\frac{\gamma}{\gamma^2-(4c_9+1)\varepsilon}\|\Delta u_0\|^2,
$$
where $\tilde{c}_7=\frac{4c_9c_{\varepsilon}+c_{\varepsilon}+c_6^4}{\gamma^2-(4c_9+1)\varepsilon}$ and $\tilde{c}_8=\frac{\gamma}{\gamma^2-(4c_9+1)\varepsilon}$. Hence, we
get (\ref{2-7-1}). Lemma \ref{thm2.3} is proved.
\end{proof}
\begin{remark}
\label{rem2.2} Based on Lemmas \ref{thm2.1}-\ref{thm2.3}, we obtain
the $H^2$-norm estimate of the numerical solution $u_N(t)$ for
problem (\ref{2-2}). Noticing that we consider the problem in 2D
case, by Sobolev's embedding theorem, we have
$H^2(\Omega)\rightharpoonup L^{\infty}(\Omega)$, that is
$$
\|u_N(t)\|_{\infty}\leq c\|u_N(t)\|_{H^2}\leq c_{10},
$$
where $c_{10}$ is a positive constant depends only on $\gamma$,
$\gamma_1$, $\gamma_2$ and the domain.
\end{remark}

\begin{theorem}
\label{thm2.4} Suppose that $u_0\in H^2(\Omega)\bigcap H_0^1(\Omega)$. Then for any
$T>0$, problem (\ref{2-2}) admits a unique solution $u_N(x,t)$, such
that
$$u_N(x,t)\in L^{\infty}(0,T;H^2_{per}(\Omega))\bigcap
L^2(0,T;H^4_{per}(\Omega)).
$$
\end{theorem}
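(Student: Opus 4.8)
The plan is to combine the a priori estimates already obtained in Lemmas \ref{thm2.1}--\ref{thm2.3} with the local existence theory for the ODE system (\ref{2-4})--(\ref{2-5}), so as to produce a global-in-time solution with the stated regularity. First I would recall from the proof of Lemma \ref{thm2.1} that, since each $f_j$ is smooth and locally Lipschitz on $\mathbb{R}^N$, the initial value problem (\ref{2-4})--(\ref{2-5}) has a unique smooth solution on a maximal interval $[0,T_N)$; equivalently, $u_N(t)=\sum_j a_j(t)\sin k_1x\sin k_2y$ exists and is unique on $[0,T_N)$. Because $S_N$ is finite dimensional, all norms on it are equivalent, so the only mechanism by which $T_N$ could be finite is that $\|u_N(t)\|$ blows up as $t\uparrow T_N$.

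Next I would rule out finite-time blow-up using the energy estimate (\ref{2-3}), namely $\|u_N(t)\|^2\le e^{c_1t}\|u_0\|^2$, whose right-hand side stays finite on every bounded interval. Hence $T_N=+\infty$, and $u_N$ exists and is unique on $[0,T]$ for arbitrary $T>0$; this settles existence and uniqueness at once.

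For the regularity assertion I would simply assemble the bounds. The pointwise-in-time estimates (\ref{2-3}), (\ref{2-6}) and (\ref{2-7}) give uniform control of $\|u_N(t)\|$, $\|\nabla u_N(t)\|$ and $\|\Delta u_N(t)\|$ on $[0,T]$; since on $S_N$ one has $\|u_N\|_{H^2}^2\le C\bigl(\|u_N\|^2+\|\nabla u_N\|^2+\|\Delta u_N\|^2\bigr)$, this yields $u_N\in L^{\infty}(0,T;H^2)$. Likewise the time-integrated estimates (\ref{2-3-1}), (\ref{2-6-1}) and (\ref{2-7-1}) bound $\int_0^T\bigl(\|\Delta u_N\|^2+\|\nabla\Delta u_N\|^2+\|\Delta^2u_N\|^2\bigr)\,d\tau$, which together with the $L^\infty$ bounds on the lower-order terms gives $\int_0^T\|u_N\|_{H^4}^2\,d\tau<\infty$, i.e. $u_N\in L^2(0,T;H^4)$.

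I expect no serious obstacle, since the hard analytic work — the a priori estimates controlled by Nirenberg's inequality and the Calderon--Zygmund type estimate — has already been carried out in the lemmas. The only point requiring care is the passage from local to global existence: one must verify that the blow-up alternative is governed by the $L^2$ norm (justified by the finite-dimensionality of $S_N$) and that the constants in (\ref{2-3}), (\ref{2-6}) and (\ref{2-7}) remain finite for every fixed $T$, which they manifestly do. Should a direct energy proof of uniqueness be preferred to the ODE argument, I would set $w=u_N-\tilde u_N$ for two solutions, test the difference of (\ref{2-2}) against $w$, use the uniform $L^\infty$ bound from Remark \ref{rem2.2} to absorb the nonlinear contributions, and conclude $w\equiv0$ via Gronwall's inequality.
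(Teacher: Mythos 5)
Your proof is correct, but it proceeds by a genuinely different route than the paper's. The paper proves Theorem \ref{thm2.4} by a fixed-point argument: it introduces the Banach space $X=L^{\infty}(0,T;H^2)\cap L^2(0,T;H^4)$ (with the initial and boundary conditions built in) and the map $T:X\to X$ sending $u_N$ to the solution $w_N$ of the linear Galerkin problem obtained by freezing the nonlinearities $\varphi(u_N)$ and $\nabla\cdot\psi(u_N)$, and then cites Lemmas \ref{thm2.1}--\ref{thm2.3} together with the contraction mapping principle (the text announces Leray--Schauder but concludes by contraction) to produce a unique fixed point; the uniqueness of the solution itself is stated but its proof is omitted. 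You instead treat (\ref{2-2}) as what it is in finite dimensions: an ODE system for the coefficients $a_j(t)$ with smooth, locally Lipschitz right-hand side, so Picard's theorem gives local existence and uniqueness, the blow-up alternative plus equivalence of norms on $S_N$ reduces global existence to the $L^2$ bound (\ref{2-3}), and the asserted regularity is assembled from the pointwise estimates (\ref{2-3}), (\ref{2-6}), (\ref{2-7}) and the time-integrated estimates (\ref{2-3-1}), (\ref{2-6-1}), (\ref{2-7-1}), using elliptic regularity (or the explicit sine expansion of elements of $S_N$) to pass from $\|\Delta u_N\|$ and $\|\Delta^2 u_N\|$ to the full $H^2$ and $H^4$ norms. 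Your continuation argument is the standard one for spectral Galerkin schemes and is more self-contained here: it needs no auxiliary linear problem, and uniqueness comes for free from ODE theory, which actually fills the gap the paper leaves open. What the paper's fixed-point template buys is a scheme that in principle survives when the approximation space is not finite dimensional; in the present setting, however, your argument is leaner, and every estimate it invokes is already proved in the paper. The one point to state carefully --- which you do flag --- is that the a priori bounds of the lemmas are derived on the maximal interval $[0,T_N)$ of the ODE solution and are then used to conclude $T_N=+\infty$.
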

\begin{proof}
We are going to apply the Leray-Schauder fixed point theorem to complete the proof.

Define the linear space
$$
X=\left\{u_N\in L^{\infty}(0,T;H^2_{per}(\Omega))\bigcap
L^2(0,T;H^4_{per}(\Omega)); u|_{\partial\Omega}=0,u(x,y,0)=u_0\right\}.
$$
Clearly, $X$ is a Banach space. Define the associated operator $T$,
$$
T: X\rightarrow X,\quad u_N\rightarrow w_N,
$$
where $w$ is determined by the following linear problem:
\begin{equation}
\begin{aligned}&
\left(\frac{\partial w_N}{\partial t},v_N\right)+\gamma(\Delta w_N,\Delta v_N)=(\varphi(u_N),\Delta v_N)+(\nabla\cdot\psi(u_N),v_N),~\forall v_N\in S_N,
\\
&\frac{\partial w_N}{\partial n}|_{\partial\Omega}=\frac{\partial \Delta w_N}{\partial n}|_{\partial\Omega}=0,\quad w(x,y,0)=u_0.
\end{aligned}\nonumber\end{equation}
From the discussions in Lemmas \ref{thm2.1}-\ref{thm2.3} and by the contraction mapping principle, $T$ has a unique fixed point $u$, which is the desired solution of problem \ref{2-2}.

Because the proof of the uniqueness of the solution is easy, we omit it here.

Then, we complete the proof.

\end{proof}

Now, we estimate the error $\|u(t)-u_N(t)\|$. Denote
$\eta_N=u(t)-P_Nu(t)$ and $e_N=P_Nu(t)-u_N(t)$. From (\ref{1-1}) and
(\ref{2-2}), we get:
\begin{equation}\begin{aligned}
\label{2-23}&(e_{Nt},v_N)+\gamma( \Delta e_{N}, \Delta
v_{N}) \\
=&(\varphi(u)-\varphi(u_N),\Delta v_N)+(\nabla\cdot(\psi(u)-\psi(u_N)),v_N),~~\forall v_N\in S_N.
\end{aligned}\end{equation}
Set $v_N=e_N$ in (\ref{2-23}), we derive that
\begin{eqnarray}
\label{2-24}\frac12\frac d{dt}\|e_N\|^2+\gamma\| \Delta e_{N}\|^2
=(\varphi(u)-\varphi(u_N),\Delta e_N)-(\psi(u)-\psi(u_N),\nabla\cdot
e_N).\nonumber
\end{eqnarray}
By Theorem \ref{thm1.1}, we have
$\sup_{x\in\bar{\Omega}}|u(x,t)|\leq c_{11}$, where $c_{11}$ is a
positive constant depends only on $\gamma$, $\gamma_1$, $\gamma_2$
and the domain. Then
\begin{equation}\begin{aligned}&(\varphi(u)-\varphi(u_N),\Delta e_N)
\\
=&\gamma_2((u-u_N)(u^2+uu_N+u_N^2),\Delta e_N)
\\&+\gamma_1((u+u_N)(u-u_N),\Delta e_N)-(u-u_N,\Delta e_N)
\\
\leq&\gamma_2\sup_{x\in\bar{\Omega}}|u^2+uu_N+u_N^2|\cdot\|e_N+\eta_N\|\|\Delta
e_N\|
\\
&+\gamma_1\sup_{x\in\bar{\Omega}}|u+u_N|\cdot\|e_N+\eta_N\|\|\Delta
e_N\|+\|e_N+\eta_N\|\|\Delta e_N\|
\\
\leq&2\gamma_2(c_{10}^2+c_{10}c_{11}+c_{11}^2)(\|e_N\|\|\Delta
e_N\|+\|\eta_N\|\|\Delta e_N\|)
\\
&+2\gamma_1(c_{10}+c_{11})(\|e_N\|\|\Delta e_N\|+\|\eta_N\|\|\Delta
e_N\|)\\&+2(\|e_N\|\|\Delta e_N\|+\|\eta_N\|\|\Delta e_N\|)
\\
=&[2\gamma_2(c_{10}^2+c_{10}c_{11}+c_{11}^2)+2\gamma_1(c_{10}+c_{11})+2](\|e_N\|\|\Delta
e_N\|+\|\eta_N\|\|\Delta e_N\|)
\\
\leq&\frac{\gamma}4\|\Delta e_N\|^2+c_{12}(
\|e_N\|^2+\|\eta_N\|^2),\nonumber
\end{aligned}\end{equation}
where
$c_{12}=\frac{8}{\gamma}[\gamma_2(c_{10}^2+c_{10}c_{11}+c_{11}^2)+\gamma_1(c_{10}+c_{11})+1]^2$.
On the other hand, we have
\begin{eqnarray}
-(\psi(u)-\psi(u_N),\nabla\cdot e_N) &=&-((u-u_N)(u+u_N),\nabla\cdot e_N)
\nonumber
\\
&\leq& \sup_{x\in\bar{\Omega}}|u+u_N|\cdot\|e_N+\eta_N\|\|\nabla
e_N\| \nonumber
\\
&\leq&2(c_{10}+c_{11})(\|e_N\|\|\nabla e_N\|+\|\eta_N\|\|\nabla
e_N\|) \nonumber
\\
&\leq&-\frac{\gamma}4(e_N,\Delta e_N)+c_{13}(\|e_N\|^2+\|\eta_N\|^2)
\nonumber \\
&\leq&\frac{\gamma}4\|\Delta
e_N\|^2+(c_{13}+\frac{\gamma}{16})\|e_N\|^2+c_{13}\|\eta_N\|^2,
\nonumber
\end{eqnarray}
where $c_{13}=\frac4{\gamma}(c_{10}+c_{11})^2$. From Theorem
\ref{thm1.1} and (B2), we have
$$
\|\eta_N\|\leq cN^{-2}\|\Delta u\|\leq c_{14}N^{-2}.
$$
Summing up, we immediately obtain
\begin{eqnarray}
\frac d{dt}\|e_N\|^2+\gamma\|\Delta
e_N\|^2\leq(2c_{12}+2c_{13}+\frac{\gamma}8)\|e_N\|^2+2(c_{12}+c_{13})c_{14}N^{-4}.\nonumber
\end{eqnarray}
Therefore, by Gronwall's inequality, we deduce that
\begin{eqnarray}
\label{2-28} \|e_N\|\leq c(\|e_N(0)+N^{-2}).
\end{eqnarray}

Thus, we obtain the following theorem:
\begin{theorem}
\label{thm2.5} Suppose that $u_0\in H^2_{per}(\Omega)$, $u(x,t)$ is
the solution of problem (\ref{1-1})-(\ref{1-3}) and $u_N(x,t)$ is
the solution of semi-discrete approximation (\ref{2-2}). Then, there
exists a constant $c$, independent of $N$, such that
$$
\|u(x,t)-u_N(x,t)\|\leq c(N^{-2}+\|u_0-u_N(0)\|).
$$
\end{theorem}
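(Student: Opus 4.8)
The plan is to bound the full error $\|u(t)-u_N(t)\|$ by splitting it through the projection, writing $u-u_N=\eta_N+e_N$ where $\eta_N=u-P_Nu$ and $e_N=P_Nu-u_N$, so that the triangle inequality gives $\|u-u_N\|\le\|\eta_N\|+\|e_N\|$. The first term is controlled immediately by the projection estimate (B2): since Theorem~\ref{thm1.1} places $u$ in $L^\infty([0,T];H^2(\Omega))$, we have $\|\eta_N\|\le cN^{-2}\|\Delta u\|\le c_{14}N^{-2}$. Thus everything reduces to estimating $\|e_N\|$, and the target bound is exactly the inequality (\ref{2-28}) derived just above the statement.

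To obtain (\ref{2-28}) I would start from the error equation (\ref{2-23}), which follows by subtracting the semi-discrete scheme (\ref{2-2}) from the projected weak form of (\ref{1-1}) and using that $P_N$ commutes with $\Delta$ (property (B1)) together with orthogonality (\ref{2-1}) to eliminate the $\eta_{Nt}$ contribution against test functions in $S_N$. Testing with $v_N=e_N$ produces the energy identity with $\tfrac12\tfrac{d}{dt}\|e_N\|^2+\gamma\|\Delta e_N\|^2$ on the left and the two nonlinear difference terms on the right. The cubic nonlinearity $\varphi$ is handled by factoring $\varphi(u)-\varphi(u_N)$ using the algebraic identities for differences of cubes and squares, then bounding the polynomial coefficients by their pointwise suprema: here the key inputs are the uniform bound $\|u_N\|_\infty\le c_{10}$ from Remark~\ref{rem2.2} and $\sup|u|\le c_{11}$ from Theorem~\ref{thm1.1}. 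Each resulting term has the shape $\|e_N+\eta_N\|\|\Delta e_N\|$, which after Young's inequality with a small coefficient absorbs $\tfrac{\gamma}{4}\|\Delta e_N\|^2$ into the left side and leaves lower-order terms $c_{12}(\|e_N\|^2+\|\eta_N\|^2)$. The convective difference $-(\psi(u)-\psi(u_N),\nabla\cdot e_N)$ is treated analogously, the gradient factor being reabsorbed via $\|\nabla e_N\|^2=-(e_N,\Delta e_N)\le\tfrac{\gamma}{16}\|e_N\|^2+\tfrac{4}{\gamma}\|\Delta e_N\|^2$-type interpolation so that again a piece of $\gamma\|\Delta e_N\|^2$ is used up and only $\|e_N\|^2,\|\eta_N\|^2$ remain.

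Summing the two estimates and inserting $\|\eta_N\|\le c_{14}N^{-2}$ yields a differential inequality of the form
\begin{equation}
\frac{d}{dt}\|e_N\|^2+\gamma\|\Delta e_N\|^2\le C\|e_N\|^2+CN^{-4},\nonumber
\end{equation}
after which Gronwall's inequality over $[0,t]\subset[0,T]$ gives $\|e_N(t)\|\le c\bigl(\|e_N(0)\|+N^{-2}\bigr)$, precisely (\ref{2-28}). Finally I combine this with the projection bound on $\eta_N$ and observe that $\|e_N(0)\|=\|P_Nu_0-u_N(0)\|$; since $u_N(0)=P_Nu_0$ by construction this is zero, but retaining it allows the more general initial datum $u_N(0)$ stated in the theorem, so the triangle inequality gives $\|u-u_N\|\le\|\eta_N\|+\|e_N\|\le c(N^{-2}+\|u_0-u_N(0)\|)$, which is the assertion.

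The main obstacle is the nonlinear term handling: one must ensure that every occurrence of the top-order quantity $\|\Delta e_N\|$ appears only to first power (multiplied by an $L^2$ factor) so that Young's inequality can absorb it, rather than as an uncontrolled higher power. This is exactly why the uniform $L^\infty$ bounds $c_{10}$ and $c_{11}$ on $u_N$ and $u$ are indispensable—without them the cubic term would produce factors like $\|e_N\|_\infty$ or higher $L^p$ norms that cannot be closed by a simple Gronwall argument. The cumulative smallness constant $\varepsilon$ in the Young splittings must be chosen so the total coefficient of $\|\Delta e_N\|^2$ subtracted from the left stays strictly below $\gamma$, keeping the dissipation term with a positive sign before discarding it.
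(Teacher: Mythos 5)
Your proposal is correct and follows essentially the same route as the paper: the same splitting $u-u_N=\eta_N+e_N$, the same error equation tested with $v_N=e_N$, the same use of the uniform bounds $\|u_N\|_\infty\le c_{10}$ and $\sup|u|\le c_{11}$ to absorb $\|\Delta e_N\|^2$ via Young's inequality, the projection estimate (B2) for $\|\eta_N\|\le c N^{-2}$, and Gronwall to reach (\ref{2-28}). Your closing remark relating $\|e_N(0)\|$ to $\|u_0-u_N(0)\|$ is in fact slightly more careful than the paper, which passes from (\ref{2-28}) to the stated bound without comment.
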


\section{Fully discrete scheme}
In this section, we set up a full-discretization scheme for problem
(\ref{1-1})-(\ref{1-3}) and consider the fully discrete scheme which
implies the pointwise bounded of the solution.

Let $\Delta t=T/M$, for a positive integer $M$, $\bar{\partial}_tu^k=\frac{u^k-u^{k-1}}{\Delta t}$. Note that $\varphi(s)=\gamma_2s^2+\gamma_1s-1$ and $\psi(s)=s^2$. The
full-discretization spectral method for problem
(\ref{1-1})-(\ref{1-3}) is read as: find $u_N^j\in
S_N~(j=0,1,2,\cdots,k)$ such that for any $v_N\in S_N$, there hold
\begin{equation}\begin{aligned}
\label{3-1} &\left(\frac{u_N^{k}-u_N^{k-1}}{\Delta
t},v_N\right)+\gamma(\Delta u_{N}^{k},\Delta
v_{N})
+\left(\varphi'(u_N^{k-1})\nabla u_N^k,\nabla v_N\right)\\
&-\frac23\left(u_N^{k-1}\nabla\cdot u_N^k,v_N\right)+\frac23\left(u_N^{k-1}u_N^k,\nabla\cdot v_N\right)=0.\end{aligned}\end{equation}
for all $T>0$ and $t\in[ 0,T]$ with $u_N(0)=P_Nu_0$.

The solution $u_N^k$ has the following property:
\begin{lemma}
\label{lem3.1} Suppose that $u_0\in H^2(\Omega)\bigcap H_0^1(\Omega)$ and $u_N^k$
is a solution of problem (\ref{3-1}), then there exists positive
constants $c_{15}, c_{16}, c_{17}, c_{18}$ depend only on $\gamma$,
$\gamma_1$, $\gamma_2$ and $u_0$, such that
$$
\|u_N^k\|\leq c_{15}, ~~\|\nabla u_{N}^k\|\leq c_{16},~~\|\Delta
u_{N}^k\|\leq c_{17},~~ \|u_{N}^k\|_{\infty}\leq c_{18}.
$$
\end{lemma}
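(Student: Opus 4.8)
The plan is to reproduce, at the discrete-in-time level, the chain of a priori estimates of Lemmas \ref{thm2.1}--\ref{thm2.3} together with the Sobolev embeddings of Remarks \ref{rem2.1}--\ref{rem2.2}, closing each estimate with a discrete Gronwall inequality and running an induction on $k$. The mechanism rests on the identity $(u_N^k-u_N^{k-1},u_N^k)=\tfrac12(\|u_N^k\|^2-\|u_N^{k-1}\|^2+\|u_N^k-u_N^{k-1}\|^2)$ and its analogues; since the basis functions of $S_N$ are eigenfunctions of $\Delta$, both $-\Delta u_N^k$ and $\Delta^2u_N^k$ lie in $S_N$ and are admissible test functions, so testing (\ref{3-1}) against $w\in\{u_N^k,-\Delta u_N^k,\Delta^2u_N^k\}$ yields a telescoping term $\tfrac{1}{2\Delta t}(\|Du_N^k\|^2-\|Du_N^{k-1}\|^2)$, with $D=\mathrm{Id},\nabla,\Delta$ respectively, a nonnegative remainder $\tfrac{1}{2\Delta t}\|D(u_N^k-u_N^{k-1})\|^2$ that is discarded, and the dissipation $\gamma\|\Delta u_N^k\|^2$, $\gamma\|\nabla\Delta u_N^k\|^2$, $\gamma\|\Delta^2u_N^k\|^2$. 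The four bounds are then obtained in the bootstrap order $L^2\Rightarrow H^1\Rightarrow(L^p,\ p<\infty)\Rightarrow H^2\Rightarrow L^\infty$, each step producing a recursion $E^k\le(1+C\Delta t)E^{k-1}+C\Delta t$ whose solution $E^k\le e^{CT}(E^0+CT)$ is uniform in $k$ because $k\Delta t\le T$.

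For the $L^2$ bound I take $v_N=u_N^k$. The lagged diffusion term is bounded below by $(\varphi'(u_N^{k-1})\nabla u_N^k,\nabla u_N^k)\ge-c_0\|\nabla u_N^k\|^2$ using $\varphi'\ge-c_0$, exactly as in Lemma \ref{thm2.1}. The decisive point is that the convective contributions cancel: the integrands of $-\tfrac23(u_N^{k-1}\nabla\cdot u_N^k,u_N^k)$ and $\tfrac23(u_N^{k-1}u_N^k,\nabla\cdot u_N^k)$ coincide, so the splitting in (\ref{3-1}) is skew-symmetric and drops out of the $L^2$ balance, mirroring $(\nabla\cdot\psi(u_N),u_N)=0$. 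Writing $\|\nabla u_N^k\|^2=-(u_N^k,\Delta u_N^k)$ and using Young's inequality to absorb $\tfrac\gamma2\|\Delta u_N^k\|^2$ leaves $(1-c\Delta t)\|u_N^k\|^2\le\|u_N^{k-1}\|^2$, so $\|u_N^k\|\le c_{15}$ by discrete Gronwall once $\Delta t$ is small.

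For the $H^1$ bound I test with $v_N=-\Delta u_N^k$ and for the $H^2$ bound with $v_N=\Delta^2u_N^k$, obtaining the telescoping and dissipative terms listed above. In both cases the remaining nonlinear and convective terms are handled by H\"older's inequality, the bound $\|u_N^{k-1}\|_p\le c_6$ for all finite $p$ (from $H^1(\Omega)\hookrightarrow L^p(\Omega)$, Remark \ref{rem2.1}), and the same Nirenberg--Gagliardo interpolation inequalities used in Lemmas \ref{thm2.2}--\ref{thm2.3}, so that the top-order quantity ($\|\nabla\Delta u_N^k\|^2$, respectively $\|\Delta^2u_N^k\|^2$) is absorbed into the dissipation and a Gronwall-type recursion survives; this gives $\|\nabla u_N^k\|\le c_{16}$ and $\|\Delta u_N^k\|\le c_{17}$. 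Finally $\|u_N^k\|_\infty\le c\|u_N^k\|_{H^2}\le c_{18}$ follows from $H^2(\Omega)\hookrightarrow L^\infty(\Omega)$ as in Remark \ref{rem2.2}.

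The main obstacle is precisely the pair of higher-order estimates, where the time lag in the coefficient $\varphi'(u_N^{k-1})$ and the convective cross-terms destroy the clean product-rule manipulation available in the continuous and semi-discrete settings. Each such term must be arranged so that every norm of $u_N^{k-1}$ that appears is already controlled by the inductive hypothesis (and by the $L^p$ bounds preceding that step in the bootstrap), while only level-$k$ quantities are absorbed into $\gamma\|\nabla\Delta u_N^k\|^2$ or $\gamma\|\Delta^2u_N^k\|^2$; checking that this absorption is feasible and that the resulting discrete Gronwall inequalities close with constants independent of $k$---which forces a time-step restriction $\Delta t\le\Delta t_0(\gamma,\gamma_1,\gamma_2,\Omega)$---is where the real effort lies.
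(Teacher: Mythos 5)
Your proposal follows essentially the same route as the paper's proof: testing (\ref{3-1}) with $u_N^k$, $\Delta u_N^k$, and $\Delta^2 u_N^k$ (your sign convention on the middle test function is immaterial), using $\varphi'\ge -c_0$ and the exact cancellation of the two convective terms for the $L^2$ estimate, then bootstrapping $L^2\Rightarrow H^1\Rightarrow L^p\Rightarrow H^2\Rightarrow L^\infty$ via H\"older, Nirenberg--Gagliardo interpolation absorbed into the dissipation, and a discrete Gronwall recursion with the same small-$\Delta t$ restriction. The structure, test functions, and key observations (skew-symmetry of the convective splitting, lagged coefficient handled through previously established bounds on $u_N^{k-1}$) all coincide with the paper's argument.
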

\begin{proof}
Let $v_N=u_N^k$ in (\ref{3-1}), we derive that
\begin{equation}\begin{aligned}&
\frac12\bar{\partial}_t\|u_N^k\|^2+\frac{\tau}2\|\bar{\partial}_tu_N^k\|^2+\gamma\|\Delta u_{N}^k\|^2+(\varphi'(u_N^{k-1})\nabla u_{N}^k,\nabla u_N^k)\\&-\frac23\left(u_N^{k-1}\nabla\cdot u_N^k,u^k_N\right)+\frac23\left(u_N^{k-1}u_N^k,\nabla\cdot u^k_N\right)=0.
\end{aligned}\nonumber\end{equation}
Note that $$
\varphi'(u_N^{k-1})=3\gamma_2(u^{k-1}_N)^2+2\gamma_1u_N^{k-1}-u_N^{k-1}\geq-2c_0=-\frac{\gamma_1^2}{3\gamma_2}-1.
$$Thus
$$
(\varphi'(u_N^{k-1})\nabla u_{N}^k,\nabla u_N^k)\geq-c_0\|\nabla u_N^k\|^2.
$$
On the other hand, we have
$$
-\frac23\left(u_N^{k-1}\nabla\cdot u_N^k,u^k_N\right)+\frac23\left(u_N^{k-1}u_N^k,\nabla\cdot u^k_N\right)=0.
$$
Therefore
\begin{equation}
\bar{\partial}_t\|u_N^k\|^2+\tau\|\bar{\partial}_tu_N^k\|^2+2\gamma\|\Delta u_{N}^k\|^2\leq c_0\|\nabla u_N^k\|^2.\label{zxp-1}
\end{equation}
Note that
\begin{equation}\label{spring2-1}
c_0\|\nabla u_N^k\|^2\leq\frac{c_0^2}{4\gamma}\|u_N^k\|^2+\gamma\|\Delta u_N^k\|^2.
\end{equation}
Summing up, we get
\begin{equation}
\label{spring-1}
\frac{\|u_N^k\|^2-\|u_N^{k-1}\|^2}{\Delta t}+\gamma\|\Delta u_N^k\|^2\leq\frac{c_0^2}{4\gamma}\|u_N^k\|^2,
\end{equation}
that is
\begin{equation}
\label{spring-2}
\|u_N^k\|^2\leq\frac{4\gamma}{4\gamma-c_0^2\Delta t}\|u_N^{k-1}\|^2\leq\left(\frac{4\gamma}{4\gamma-c_0^2\Delta t}\right)^k\|u_N^0\|^2=c_{15}.
\end{equation}

Let $\varphi=\Delta u_{N}^k$ in (\ref{3-1}), we derive that
\begin{equation}\begin{aligned}&
\frac12\bar{\partial}_t\|\nabla u_{N}^k\|^2+\frac{\tau}2\|\bar{\partial}_t\nabla u_{N}^k\|^2+\gamma\|\nabla\Delta u_{N}^k\|^2
\\=&(\varphi'(u_{N}^{k-1})\nabla u^k_{N},\nabla\Delta u^k_{N})-\frac43\left(u_N^{k-1}\nabla\cdot u_N^k,\Delta u^k_N\right)-\frac23\left(\nabla u_N^{k-1}u_N^k,\Delta u^k_N\right).\end{aligned}\label{2-2a}
\end{equation}
By Young's inequality, Sobolev's interpolation inequality and (\ref{spring-2}), we get
\begin{equation}
\begin{aligned}&(\varphi'(u_{N}^{k-1})\nabla u^k_{N},\nabla\Delta u^k_{N})\\
\leq &c(\|u_N^{k-1}\|_{L^4}^2+1)\|\nabla u_N^k\|_{\infty}\|\nabla\Delta u_N^k\|
\\
\leq&c(\|\nabla u_N^{k-1}\|\|u_N^{k-1}\|+1)(\|\nabla\Delta u_N^k\|^{\frac23}\|u_N^k\|^{\frac13}+1)\|\nabla\Delta u_N^k\|
\\
\leq&\frac{\gamma}4\|\nabla\Delta u^k_{N}\|^2+c(\|\nabla u_{N}^{k-1}\|^2+1).\nonumber
\end{aligned}\end{equation}
Using Young's inequality and Sobolev's interpolation inequality again, we get
\begin{equation}
\begin{aligned}&
-\frac43\left(u_N^{k-1}\nabla\cdot u_N^k,\Delta u^k_N\right)\\
\leq&c\|u_N^{k-1}\|\|\nabla u_N^k\|_{L^4}\|\Delta u_N^k\|_{L^4}
\\\leq& c\| u_N^{k-1}\|\|\nabla\Delta u_N^k\|^{\frac12}\|u_N^k\|^{\frac12}\|\nabla\Delta u_N^k\|^{\frac56}\|u_N^k\|^{\frac16}
\\
\leq&\frac{\gamma}8\|\nabla\Delta u_{N}^k\|^2+c,
\end{aligned}\nonumber
\end{equation}
and
\begin{equation}
\begin{aligned}
-\frac23\left(\nabla u_N^{k-1}u_N^k,\Delta u_N^k\right)
\leq&\frac23\|\nabla u_N^{k-1}\|\|u_N^k\|_{\infty}\|\Delta u_N^k\|
\\
\leq&\frac23\|\nabla u_N^{k-1}\|\|\nabla\Delta u_N^k\|\|u_N^k\|
\\
\leq&\frac{\gamma}8\|\nabla\Delta u_N^k\|^2+c\|u_N^k\|^2\|\nabla u_N^{k-1}\|^2
\\
\leq&\frac{\gamma}8\|\nabla\Delta u_N^k\|^2+c\|u_N^k\|^2.
\end{aligned}\nonumber
\end{equation}
Hence, (\ref{2-2a}) can be rewritten as
\begin{equation}
\begin{aligned}
\frac{\|\nabla u_N^k\|^2-\|\nabla u_N^{k-1}\|^2}{\Delta t}+\gamma\|\nabla\Delta u_N^k\|^2\leq c(\|\nabla u_N^{k-1}\|^2+1).
\label{spring-3}\end{aligned}
\end{equation}
Using discrete Gronwall's inequality, we deduce that
\begin{equation}
\label{sprin-4}
\|\nabla u_N^k\|^2\leq (\|\nabla u_0\|^2+ct_n)e^{ct_n}\leq c_{16}.
\end{equation}
By Sobolev's embedding theorem, we have
\begin{equation}
\label{spring-5}
\|u_N^k\|_{L^p}\leq c_{19},\quad 1<p<\infty.
\end{equation}

Let $\varphi=\Delta^2u_{N}^k$ in (\ref{3-1}), we have
\begin{equation}\begin{aligned}&
\frac12\bar{\partial}_t\|\Delta u_{N}^k\|^2+\frac{\tau}2\|\bar{\partial}_t\Delta u_{N}\|^2+\gamma\|\Delta^2u_{N}^k\|^2
\\=&\left(\nabla\cdot[\varphi'(u_{N}^{k-1})\nabla\cdot u_{N}^k],\Delta^2u_{N}^k\right)+\frac23\left(u_N^{k-1}\nabla\cdot u_N^k,\Delta^2u_{N}^k\right)\\&+\frac23\left(\nabla\cdot(u_N^{k-1}u_N^k),\Delta^2u_{N}^k\right).
\nonumber\end{aligned}\end{equation}
Based on the above results and Sobolev's interpolation inequality, we deduce that
\begin{equation}
\begin{aligned}&
\left(\nabla\cdot[\varphi'(u_{N}^{k-1})\nabla\cdot u_{N}^k],\Delta^2u_{N}^k\right)\\=&(\varphi'(u_N^{k-1})\Delta u_{N}^k,\Delta^2u_{N}^k)+(\varphi{''}(u_N^{k-1})|\nabla u_{N}^k)|^2,\Delta^2u_{N}^k)
\\
\leq&\|\varphi'(u_N^{k-1})\|_{L^4}\|\Delta u_{N}^k\|_{L^4}\|\Delta^2u_{N}^k\|+\|\varphi{''}(u_N^{k-1})\|_{L^6}\|\nabla u_{N}^k\|_{L^6}^2\|\Delta^2u_{N}^k\|
\\
\leq&c\|\Delta u_{N}^k\|_{L^4}\|\Delta^2u_{N}^k\|+c\|\nabla u_{N}^k\|_{L^6}^2\|\Delta^2u_{N}^k\|
\\
\leq&c\|\Delta^2u_N^k\|^{\frac32}\|\nabla u_N^k\|^{\frac12}+c\|\Delta^2u_N^k\|^{\frac{13}9}\|\nabla u_N^k\|^{\frac{14}9}\\
\leq&\frac{\gamma}4\|\Delta ^2u_{N}^k\|^2+c(c_{15},c_{16},c_{19}).
\nonumber\end{aligned}\end{equation}
We also have
\begin{equation}\begin{aligned}
\frac23\left(u_N^{k-1}\nabla\cdot u_N^k,\Delta^2u_{N}^k\right)\leq&\frac23\|u_N^{k-1}\|_{L^4}\|\nabla u_N^k\|_{L^4}\|\Delta^2u_N^k\|
\\
\leq&c\|\nabla u_N^k\|_{L^4}\|\Delta^2u_N^k\|\leq c\|\Delta^2u_N^k\|^{\frac32}\|\nabla u_N^k\|^{\frac12}
\\
\leq&\frac{\gamma}8\|\Delta ^2u_{N}^k\|^2+c(c_{15},c_{16},c_{19}),
\nonumber\end{aligned}\end{equation}
and
\begin{equation}\begin{aligned}&\frac23\left(\nabla\cdot(u_N^{k-1}u_N^k),\Delta^2u_{N}^k\right)\\=&\frac23[(u_N^k\nabla\cdot u_N^{k-1},\Delta^2u_{N}^k)+
(u_N^{k-1}\nabla\cdot u_N^{k},\Delta^2u_{N}^k)]
\\
\leq&\frac23\|u_N^k\|_{\infty}\|\nabla u_N^{k-1}\|\|\Delta^2u_N^k\|+\frac23\|u_N^{k-1}\|_{L^4}\|\nabla u_N^k\|_{L^4}\|\Delta^2u_N^k\|
\\
\leq&c\|\nabla u_N^{k-1}\|\|\Delta^2u_N^k\|^{\frac54}\|u_N^k\|^{\frac34}+c\|\Delta^2u_N^k\|^{\frac32}\|\nabla u_N^k\|^{\frac12}
\\
\leq&\frac{\gamma}8\|\Delta ^2u_{N}^k\|^2+c(c_{15},c_{16},c_{19}).
\nonumber\end{aligned}\end{equation}
Summing up, we derive that
\begin{equation}
\label{2-4}
\frac{\|\Delta u_N^k\|^2-\|\Delta u_N^{k-1}\|^2}{\Delta t}+\gamma\|\Delta^2u_N^k\|^2\leq c.
\end{equation}
Therefore
\begin{equation}
\label{spring-6}
\|\Delta u_N^k\|^2\leq c\Delta t+\|\Delta u_N^{k-1}\|^2\leq cT+\|\Delta u_0\|^2=c_{17}.
\end{equation}
By Sobolev's embedding theorem, we have
\begin{equation}
\label{spring-5}
\|u_N^k\|_{\infty}\leq c_{18}.
\end{equation}

Then, the proof is complete.

\end{proof}
In the following, we analyze the error estimates between the
numerical solution $u_N^k$ and the exact solution $u(t_k)$.

We introduce a linear problem as follows: $\forall v\in S_N$,
\begin{equation} \label{spring-6}
\quad\left\{ \begin{aligned}
         &\left(\frac{w_N^k-w_N^{k-1}}{\Delta t}+\gamma\Delta^2w_N^k+\gamma w^k_N-\Delta\varphi(u^k)-\nabla\cdot\psi(u^k),v_N\right)=(\gamma u_N^k,v_N),
 \\
                  &w_N^0=P_Nu_0.
                          \end{aligned} \right.
                          \end{equation}
First of all, we study the error estimates between $u(t_k)$ and $w_N^k$.
Set $u^k=u(t_k)$, $\eta^k=u^k-P_Nu^k$ and $\theta^k=P_Nu^k-w_N^k.$ Then, we have
$$u^k-w_N^k=u^k-P_Nu^k+P_Nu^k-w_N^k=\eta^k+\theta^k.
$$
\begin{lemma}
\label{lem.1}
Suppose that $u^k=u(t_k)$ is the solution of problem (\ref{1-1})-(\ref{1-3}) and $w_N^k$ is the solution of problem (\ref{spring-6}). Suppose further that $u_{tt}\in L^2(0,T;L^2(\Omega))$. Then, we have
$$
\|u^k-w_N^k\|^2\leq \|u^0-w_N^0\|^2+c(\Delta t)^2.
$$
\end{lemma}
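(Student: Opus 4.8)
The plan is to run the classical error-splitting argument for an implicit-Euler-in-time, spectral-in-space approximation. First I would write $u^k - w_N^k = \eta^k + \theta^k$ with $\eta^k = u^k - P_N u^k$ and $\theta^k = P_N u^k - w_N^k \in S_N$. Because $\eta^k \perp S_N$ by (\ref{2-1}) while $\theta^k \in S_N$, these two pieces are orthogonal, so
\[
\|u^k - w_N^k\|^2 = \|\eta^k\|^2 + \|\theta^k\|^2 .
\]
Here $\|\eta^k\|$ is the pure spatial projection error, of order $N^{-2}$ by (B2) and hence of the same order as the initial discrepancy $\|u^0 - w_N^0\| = \|\eta^0\|$, while all of the time-stepping error is carried by $\theta^k$. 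Since $w_N^0 = P_N u_0$ gives $\theta^0 = 0$, it suffices to prove $\|\theta^k\|^2 \le c(\Delta t)^2$.

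To obtain an equation for $\theta^k$, I would evaluate (\ref{1-1}) at $t=t_k$, test against $v_N \in S_N$, and write $u_t(t_k) = \frac{u^k-u^{k-1}}{\Delta t} - R^k$, where $R^k := \frac{u^k-u^{k-1}}{\Delta t} - u_t(t_k)$ is the consistency error of the backward Euler step. Adding $\gamma(u^k,v_N)$ to both sides so that the left-hand operator matches that of (\ref{spring-6}), and then subtracting (\ref{spring-6}) (whose right-hand zeroth-order term is read as $\gamma(u^k,v_N)$, the only choice that makes $w_N^k$ a genuine approximation of the exact solution), the $\gamma$-terms cancel and I am left, for every $v_N \in S_N$, with
\[
\Big(\tfrac{e^k-e^{k-1}}{\Delta t},v_N\Big) + \gamma(\Delta^2 e^k,v_N) + \gamma(e^k,v_N) = (R^k,v_N),\qquad e^k := u^k - w_N^k .
\]
Substituting $e^k = \eta^k + \theta^k$ and taking $v_N = \theta^k$, every contribution of $\eta^k$ disappears: the difference quotient and the zeroth-order term vanish because $\eta^k,\eta^{k-1} \perp S_N$, and the biharmonic term vanishes after integrating by parts twice, since $\Delta\theta^k \in S_N$ whereas $\Delta\eta^k = (I-P_N)\Delta u^k \perp S_N$ by (B1). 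This yields the closed identity
\[
\Big(\tfrac{\theta^k-\theta^{k-1}}{\Delta t},\theta^k\Big) + \gamma\|\Delta\theta^k\|^2 + \gamma\|\theta^k\|^2 = (R^k,\theta^k).
\]

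Next I would apply the elementary inequality $(\theta^k-\theta^{k-1},\theta^k) \ge \tfrac12(\|\theta^k\|^2 - \|\theta^{k-1}\|^2)$ together with Young's inequality $(R^k,\theta^k) \le \tfrac{1}{2\gamma}\|R^k\|^2 + \tfrac{\gamma}{2}\|\theta^k\|^2$, absorbing the last term into the coercive $\gamma\|\theta^k\|^2$ on the left. It is precisely the zeroth-order term $\gamma w_N^k$ built into (\ref{spring-6}) that furnishes this coercivity and lets me avoid a discrete Gronwall argument altogether. Discarding the nonnegative $\gamma\|\Delta\theta^k\|^2$ term and multiplying by $2\Delta t$ gives $\|\theta^k\|^2 - \|\theta^{k-1}\|^2 \le \tfrac{\Delta t}{\gamma}\|R^k\|^2$, and a telescoping sum from $1$ to $k$ with $\theta^0 = 0$ produces $\|\theta^k\|^2 \le \tfrac{1}{\gamma}\sum_{j=1}^k \Delta t\,\|R^j\|^2$.

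The main obstacle, and the only place the hypothesis $u_{tt}\in L^2(0,T;L^2(\Omega))$ enters, is controlling this accumulated consistency error. Using Taylor's formula with integral remainder I would write $R^k = \tfrac{1}{\Delta t}\int_{t_{k-1}}^{t_k}(t_{k-1}-s)\,u_{tt}(s)\,ds$, whence Cauchy--Schwarz yields $\Delta t\,\|R^k\|^2 \le \tfrac{(\Delta t)^2}{3}\int_{t_{k-1}}^{t_k}\|u_{tt}(s)\|^2\,ds$ and therefore $\sum_{j=1}^k \Delta t\,\|R^j\|^2 \le \tfrac{(\Delta t)^2}{3}\|u_{tt}\|_{L^2(0,T;L^2)}^2$. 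This gives $\|\theta^k\|^2 \le c(\Delta t)^2$, and combining with the orthogonal splitting and the (B2) bound on $\|\eta^k\|$ (of the same spatial order as $\|u^0-w_N^0\|$) delivers $\|u^k - w_N^k\|^2 \le \|u^0-w_N^0\|^2 + c(\Delta t)^2$. The only remaining bookkeeping is to check that the boundary terms from the double integration by parts vanish, which follows from the homogeneous conditions encoded in the sine basis spanning $S_N$.
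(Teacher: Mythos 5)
Your proposal is correct and follows essentially the same route as the paper's own proof: the splitting $u^k-w_N^k=\eta^k+\theta^k$, the error equation for $\theta^k$ obtained by comparing the exact equation at $t_k$ with (\ref{spring-6}) (reading its zeroth-order right-hand term as $\gamma(u^k,v_N)$, exactly as the paper implicitly does), testing with $v_N=\theta^k$, absorbing $\gamma\|\theta^k\|^2$ via Young's inequality, bounding the backward-Euler consistency error by Taylor's formula with integral remainder and Cauchy--Schwarz, and telescoping. Your treatment is in fact slightly cleaner in two spots where the paper is sloppy --- you note $\theta^0=0$ and sum the consistency errors directly instead of invoking a maximum over the intermediate points $\xi^i$, and you make explicit the orthogonal recombination $\|u^k-w_N^k\|^2=\|\eta^k\|^2+\|\theta^k\|^2$ that the paper leaves unstated --- but the argument is the same.
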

\begin{proof}
Note that $u^k-w_N^k=\eta^k+\theta^k$. $\theta^k$ satisfies
\begin{equation}
\label{spring-8}
\left(\frac{\theta^k-\theta^{k-1}}{\Delta t}+\gamma\Delta^2\theta^k+\gamma\theta^k-\left(\frac{u^k-u^{k-1}}{\Delta t}-u_t\right),v_N\right)=0.
\end{equation}
Set $v_N=\theta^k$, we get
\begin{equation}
\frac12\frac{\|\theta^k\|^2-\|\theta^{k-1}\|^2}{\Delta t}+\gamma\|\Delta\theta^k\|^2+\gamma\|\theta^k\|^2=\left(\frac{u_N^k-u_N^{k-1}}{\Delta t}-u_t^k,\theta^k\right).
\nonumber\end{equation}
Note that
\begin{equation}
\begin{aligned}&
\left(\frac{u_N^k-u_N^{k-1}}{\Delta t}-u_t^k,\theta^k\right)\\\leq&\|\theta^k\|\left\|\frac{u_N^k-u_N^{k-1}}{\Delta t}-u_t^k\right\|
\\
\leq&\gamma\|\theta^k\|^2+\frac1{(\Delta t)^2}\left\|\int_{t_{k-1}}^{t_k}(s-t_{k-1})u_{tt}(\xi^k)ds\right\|^2
\\
\leq&\gamma\|\theta^k\|^2+\frac1{(\Delta t)^2}\int_{t_{k-1}}^{t_k}(s-t_{k-1})^2ds\int_{t_{k-1}}^{t_k}\|u_{tt}(\xi^k)\|^2ds
\\
\leq&\gamma\|\theta^k\|^2+\Delta t\int_{t_{k-1}}^{t_k}\|u_{tt}(\xi^k)\|^2ds,\nonumber
\end{aligned}\end{equation}
where $t_{k-1}<\xi^k<t_k$. Summing up, we derive that
\begin{equation}
\frac{\|\theta^k\|^2-\|\theta^{k-1}\|^2}{\Delta t}\leq\Delta t\int_{t_{k-1}}^{t_k}\|u_{tt}(\xi^k)\|^2ds,
\nonumber\end{equation}
that is
\begin{equation}
\|\theta^k\|^2\leq \|\theta^{k-1}\|^2+(\Delta t)^2\int_{t_{k-1}}^{t_k}\|u_{tt}(\xi^k)\|^2ds.
\nonumber\end{equation}
Therefore
\begin{equation}
\|\theta^k\|^2\leq \|\theta^{0}\|^2+(\Delta t)^2\sum_{i=1}^k\int_{t_{k-1}}^{t_k}\|u_{tt}(\xi^i)\|^2ds,\nonumber
\end{equation}
where $i=1,2,\cdots,k$ and $t_{i-1}<\xi^i<t_i$. Set $\|u_{tt}(\xi)\|=\max\{\|u_{tt}(\xi^i)\|\}$, we have
\begin{equation}
\begin{aligned}
\|\theta^k\|^2\leq& \|\theta^{0}\|^2+(\Delta t)^2\sum_{i=1}^k\int_{t_{k-1}}^{t_k}\|u_{tt}(\xi)\|^2ds
\\
=&\|\theta^{0}\|^2+(\Delta t)^2\int_0^T\|u_{tt}(\xi)\|^2ds
\\
\leq&\|\theta^{0}\|^2+c(\Delta t)^2.\nonumber\end{aligned}
\end{equation}
Hence, the proof is complete.

\end{proof}

Secondly, we study the error estimate between $w_N^k$ and $u_N^k$. Set $w_N^k-u_N^k=e_N^k$. We have the following lemma:

\begin{lemma}
\label{lem.2}
Suppose that $w_N^k$ is the solution of problem (\ref{spring-6}) and $u_N^k$ is the solution of the full-discrete scheme (\ref{3-1}). Suppose further that $u_t\in L^2(0,T;L^2(\Omega))$. Then, we have
$$
\|w_N^k-u_N^k\|^2\leq c((\Delta t)^2+N^{-4}).
$$
\end{lemma}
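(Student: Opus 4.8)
The plan is to derive an equation for the error $e_N^k=w_N^k-u_N^k\in S_N$ by subtracting the fully discrete scheme (\ref{3-1}) from the variational form of the auxiliary linear problem (\ref{spring-6}), and then to close a discrete energy estimate by testing against $e_N^k$ and invoking the discrete Gronwall inequality. First I would rewrite (\ref{spring-6}) weakly: integrating by parts (using that $\Delta v_N\in S_N$ and the boundary conditions) turns $\gamma(\Delta^2 w_N^k,v_N)$ into $\gamma(\Delta w_N^k,\Delta v_N)$, $-(\Delta\varphi(u^k),v_N)$ into $(\varphi'(u^k)\nabla u^k,\nabla v_N)$, and the convective part into the skew-symmetric trilinear form $b(a,u,v):=-\tfrac23(a\nabla\cdot u,v)+\tfrac23(au,\nabla\cdot v)$, for which $(\nabla\cdot\psi(u^k),v_N)=-b(u^k,u^k,v_N)$ and, crucially, $b(a,u,u)=0$. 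The structural point is that the term $\gamma w_N^k$ in (\ref{spring-6}) and the forcing $\gamma u_N^k$ combine after subtraction into a coercive contribution $\gamma(e_N^k,v_N)$; this is precisely what the auxiliary problem was engineered to produce.

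Subtracting (\ref{3-1}) then yields, for every $v_N\in S_N$,
\[
\Big(\tfrac{e_N^k-e_N^{k-1}}{\Delta t},v_N\Big)+\gamma(\Delta e_N^k,\Delta v_N)+\gamma(e_N^k,v_N)=\big(\varphi'(u_N^{k-1})\nabla u_N^k-\varphi'(u^k)\nabla u^k,\nabla v_N\big)+\big(b(u_N^{k-1},u_N^k,v_N)-b(u^k,u^k,v_N)\big).
\]
I would choose $v_N=e_N^k$. The time-difference term gives, via the identity $(a-b,a)=\tfrac12(\|a\|^2-\|b\|^2+\|a-b\|^2)$, the monotone quantity $\tfrac{1}{2\Delta t}(\|e_N^k\|^2-\|e_N^{k-1}\|^2)$ plus a nonnegative remainder, while the left-hand side supplies the coercive terms $\gamma\|\Delta e_N^k\|^2+\gamma\|e_N^k\|^2$. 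On the right I would split the $\varphi$-difference as $([\varphi'(u_N^{k-1})-\varphi'(u^k)]\nabla u_N^k,\nabla e_N^k)+(\varphi'(u^k)\nabla(u_N^k-u^k),\nabla e_N^k)$ and the convective difference, using bilinearity, as $b(u_N^{k-1}-u^k,u_N^k,e_N^k)+b(u^k,u_N^k-u^k,e_N^k)$, exploiting the skew-symmetry $b(u^k,e_N^k,e_N^k)=0$ to cancel the leading part of the last term.

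Next I would insert the decompositions $u_N^k-u^k=-(e_N^k+\eta^k+\theta^k)$ and $u_N^{k-1}-u^k=-(e_N^{k-1}+\eta^{k-1}+\theta^{k-1})+(u^{k-1}-u^k)$, where $\eta^j=u^j-P_Nu^j$ and $\theta^j=P_Nu^j-w_N^j$, and bound each resulting piece by Hölder, Sobolev embedding and interpolation. All coefficients are under control: $\|u_N^j\|_\infty\le c_{18}$, $\|\nabla u_N^j\|\le c_{16}$, $\|\Delta u_N^j\|\le c_{17}$ from Lemma \ref{lem3.1}, together with $\|u^k\|_\infty,\|\nabla u^k\|\le c$ from Theorem \ref{thm1.1}; the small quantities are estimated by $\|\eta^j\|+\|\nabla\eta^j\|\le cN^{-2}$ via (B2), by $\|\theta^j\|\le c\Delta t$ (the proof of Lemma \ref{lem.1} gives $\|\theta^j\|^2\le\|\theta^0\|^2+c(\Delta t)^2$ with $\theta^0=0$), and by the time-increment bound $\|u^{k-1}-u^k\|\le(\Delta t)^{1/2}\big(\int_{t_{k-1}}^{t_k}\|u_t\|^2\,ds\big)^{1/2}$. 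Every factor that carries a derivative of the error, namely $\|\nabla e_N^k\|$ or $\|\nabla\cdot e_N^k\|$, would be absorbed into $\tfrac{\gamma}{2}\|\Delta e_N^k\|^2$ through Young's inequality combined with the interpolation estimate $\|\nabla e_N^k\|\le\|\Delta e_N^k\|^{1/2}\|e_N^k\|^{1/2}$.

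The outcome should be a recursion $\tfrac{\|e_N^k\|^2-\|e_N^{k-1}\|^2}{\Delta t}\le c(\|e_N^k\|^2+\|e_N^{k-1}\|^2)+c((\Delta t)^2+N^{-4})$, whence, after moving the $c\Delta t\|e_N^k\|^2$ term to the left for $\Delta t$ small and applying the discrete Gronwall inequality with $e_N^0=w_N^0-u_N^0=P_Nu_0-P_Nu_0=0$, the claimed bound $\|w_N^k-u_N^k\|^2\le c((\Delta t)^2+N^{-4})$ follows. The hard part will be the treatment of the nonlinear differences: one must show that the explicit lagging of the coefficient ($\varphi'(u_N^{k-1})$ in place of $\varphi'(u^k)$) and of the convective factor contributes only at the consistency order $O(\Delta t)$, while simultaneously absorbing the genuinely higher-order error derivatives into the single coercive term $\gamma\|\Delta e_N^k\|^2$ and preserving the skew-symmetric cancellation $b(u^k,e_N^k,e_N^k)=0$. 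Keeping these Young-type absorptions compatible with the final first-order-in-$\Delta t$, second-order-in-$N^{-1}$ rate is the delicate point of the argument.
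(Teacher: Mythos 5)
Your proposal follows essentially the same route as the paper's proof: subtract the scheme (\ref{3-1}) from the auxiliary linear problem (\ref{spring-6}), test with $e_N^k$, split the nonlinear differences by add-and-subtract/mean-value arguments into pieces controlled by $\|e^j\|$, $\|\eta^j\|\le cN^{-2}$ (from (B2)), $\|\theta^j\|\le c\Delta t$ (from Lemma \ref{lem.1} with $\theta^0=0$), and the time increment of $u$, absorb all error derivatives into $\gamma\|\Delta e_N^k\|^2$ using the a priori bounds of Lemma \ref{lem3.1}, and conclude by discrete Gronwall with $e_N^0=0$. The only differences are cosmetic: you package the convective terms as a skew-symmetric trilinear form and you keep the coercive contribution $\gamma(e_N^k,v_N)$ (which is indeed what the subtraction produces; the paper instead records this term as $-\gamma\theta^k$ and bounds it as a forcing term), and both variants close the estimate identically.
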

\begin{proof}
Combining (\ref{3-1}) and (\ref{spring-6}) together gives
\begin{equation}
\begin{aligned}
&\left(\frac{e^k-e^{k-1}}{\Delta t}+\gamma\Delta^2e^k-\gamma\theta^k,v_N\right)+(\nabla\varphi(u^k)-\varphi'(u_N^{k-1})\nabla u_N^k,\nabla v_N)
\\
=&(\nabla\cdot(u^k)^2,v_N)-\frac23(u_N^{k-1}\nabla\cdot u_N^k,v_N)+\frac23(u_N^{k-1}u_N^k,\nabla\cdot v_N).
\label{spring-9}\end{aligned}
\end{equation}
Set $v_N=e^k$ in (\ref{spring-9}), we get
\begin{equation}\begin{aligned}
\label{spring-10}&
\frac12\frac{\|e^k\|^2-\|e^{k-1}\|^2}{\Delta t}+\gamma\|\Delta e^k\|^2
\\
=&\gamma(\theta^k,e^k)-(\nabla\varphi(u^k)-\varphi'(u_N^{k-1})\nabla u_N^k,\nabla e^k)
\\
&+(\nabla\cdot(u^k)^2,e^k)-\frac23(u_N^{k-1}\nabla\cdot u_N^k,e^k)+\frac23(u_N^{k-1}u_N^k,\nabla\cdot e^k).
\nonumber\end{aligned}
\end{equation}
Note that
\begin{equation}
\begin{aligned}&
-(\nabla \varphi(u^k)-\varphi'(u_N^{k-1})\nabla u_{N}^k,\nabla e^k)
=-(\varphi'(u^k)\nabla u^k-\varphi'(u_N^{k-1})\nabla u_{N}^k,\nabla e^k)
\\
=&-(\varphi'(u^k)\nabla u^k-\varphi'(u^k)\nabla u_{N}^k+\varphi'(u^k)\nabla u_{N}^k-\varphi'(u^{k-1})\nabla u_{N}^k
\\
&+\varphi'(u^{k-1})\nabla u_{N}^k-\varphi'(u_N^{k-1})\nabla u_{N}^k,\nabla e^k)
\\
=&-(\varphi'(u^k)(\nabla u^k-\nabla u_{N}^k),\nabla e^k)-([\varphi'(u^k)-\varphi'(u^{k-1})]\nabla u_{N}^k,\nabla e^k)
\\&-([\varphi'(u^{k-1})-\varphi'(u_N^{k-1})]\nabla u_{N}^k,\nabla e^k)
\\
\triangleq&I_1+I_2+I_3.
\end{aligned}
\nonumber\end{equation}
\begin{equation}
\begin{aligned}
I_1=&(\varphi'(u^k)\Delta e^k+\varphi{''}(u^k)\nabla u^k\nabla e^k,u^k-u_N^k)
\\
\leq&(\|\varphi'(u^k)\|_{\infty}\|\Delta e^k\|+\|\varphi{''}(u^k)\|_{\infty}\|\nabla u^k\|_{L^4}\|\nabla e^k\|_{L^4})\|u^k-u_N^k\|
\\
\leq&(\|\varphi'(u^k)\|_{\infty}\|\Delta e^k\|+C\|\varphi{''}(u^k)\|_{\infty}\|\nabla u^k\|_{L^4}\|\Delta e^k\|)\|u^k-u_N^k\|\\
\leq& c\|u^k-u_N^k\|(\|e^k\|+\|\Delta e^k\|)
\\
\leq&c(\|e^{k}\|+\|\eta^{k}\|+\|\theta^{k}\|)(\|e^k\|+\|\Delta e^k\|)
\\
\leq& \varepsilon\|\Delta e^k\|^2+c(\|\eta^k\|+\|\theta^k\|+\|e^k\|).
\nonumber
\end{aligned}
\end{equation}
\begin{equation}
\begin{aligned}I_2=&-(\varphi{''}(\phi_1u^k+(1-\phi_1)u^{k-1})\nabla u_{N}^k(u^k-u^{k-1}),\nabla e^k)
\\
\leq&\|\varphi{''}(\lambda_1u^k+(1-\lambda_1)u^{k-1})\|_{\infty}\|\nabla u_{N}^k\|_{L^4}\|u^k-u^{k-1}\|\|\nabla e^k\|_{L^4}
\\\leq&c\|\varphi{''}(\lambda_1u^k+(1-\lambda_1)u^{k-1})\|_{\infty}\|\nabla u_{N}^k\|_{L^4}\|u^k-u^{k-1}\|(\|e^k\|+\|\Delta e^k\|)
\\\leq& c(\|e^k\|+\|\Delta e^k\|)\|u^k-u^{k-1}\|
\\\leq&\varepsilon\|\Delta e^k\|^2+c\Delta t\int_{t_{k-1}}^{t_k}\|u_t\|^2ds,
\nonumber
\end{aligned}
\end{equation}
\begin{equation}
\begin{aligned}
I_3=&-(\varphi{''}(\lambda_2u^{k-1}+(1-\lambda_2)u_N^{k-1})\nabla u_{N}^k(u^{k-1}-u_N^{k-1}),\nabla e^k)
\\
\leq&\|\varphi{''}(\lambda_2u^{k-1}+(1-\lambda_2)u_N^{k-1})\|_{\infty}\|\nabla u_{N}^k\|_{L^4}\|u^{k-1}-u_N^{k-1}\|\|\nabla e^k\|_{L^4}
\\\leq& c(\|e^k\|+\|\Delta e^k\|)\|u^{k-1}-u_N^{k-1}\|
\\
\leq&\varepsilon\|\Delta e^k\|^2+c(\|\eta^{k-1}\|^2+\|\theta^{k-1}\|^2+\|e^{k-1}\|^2),
\nonumber
\end{aligned}
\end{equation}
where $\lambda_1,\lambda_2\in(0,1)$. Hence
\begin{equation}\begin{aligned}&
(\nabla \varphi(u^k)-\varphi'(u_N^{k-1})\nabla u_{N}^k,\nabla e^k)\\\leq& 3\varepsilon\|\Delta e^k\|^2
+c(\|\eta^{k-1}\|^2+\|\theta^{k-1}\|^2+\|e^{k-1}\|^2+\|\eta^{k}\|^2
\\&+\|\theta^{k}\|^2+\|e^{k}\|^2+\Delta t\int_{t_{k-1}}^{t_k}\|u_t\|^2ds
).
\nonumber
\end{aligned}
\end{equation}
We also have
\begin{equation}
\begin{aligned}
&(\nabla\cdot(u^k)^2,e^k)-\frac23(u_N^{k-1}\nabla\cdot u_N^k,e^k)+\frac23(u_N^{k-1}u_N^k,\nabla\cdot e^k)
\\
=&\frac23(u^k\nabla\cdot u^k-u^{k-1}_N\nabla\cdot u_N^k,e^k)+\frac43(u^k\nabla\cdot u^k,e^k)-\frac23(u_N^{k-1}u_N^k,\nabla\cdot e^k)
\\
=&\frac23(u^k\nabla u^k-u^{k-1}_N\nabla\cdot u_N^k,e^k)-\frac23([u^k]^2-u_N^{k-1}u_N^k,\nabla\cdot e^k)
\\
=&\frac23[(u^k-u_N^k,e^k\nabla\cdot u^k)+(u_N^k-u_N^{k-1},e^k\nabla\cdot u^k)-(u^k-u^{k-1},u^k\nabla \cdot e^k)
\\& -(u^k-u^k_N,u^{k-1}\nabla\cdot e^k)-(u^{k-1}-u_N^{k-1},u_N^k\nabla e^k]
\\
\leq&\varepsilon(\|e^k\|^2+\|\Delta e^k\|^2)
+c(\|\eta^{k-1}\|^2+\|\theta^{k-1}\|^2+\|e^{k-1}\|^2+\|\eta^{k}\|^2\\&+\|\theta^{k}\|^2+\|e^{k}\|^2+\Delta t\int_{t_{k-1}}^{t_k}\|u_t\|^2ds),
\nonumber
\end{aligned}
\end{equation}and
$$
\gamma(\theta^k,e^k)\leq\varepsilon\|e^k\|^2+c\|\theta^k\|^2.
$$
Summing up, we immediately obtain
\begin{equation}
\begin{aligned}&
\frac{\|e^k\|^2-\|e^{k-1}\|^2}{\Delta t}+2(\gamma-5\varepsilon)\|\Delta e^k\|^2
\\
\leq&c(\|\eta^{k-1}\|^2+\|\eta^k\|^2+\|\theta^{k-1}\|^2+\|\theta^k\|^2+\|e^{k-1}\|^2+\|e^k\|^2+\Delta t\int_{t_{k-1}}^{t_k}\|u_t\|^2ds),
\nonumber
\end{aligned}
\end{equation}where $\varepsilon$ is small enough, which satisfies $\gamma-5\varepsilon>0$. Thus
\begin{equation}\begin{aligned}&
(1-c\Delta t)\|e^k\|^2+2(\gamma-5\varepsilon)\Delta t\|\Delta e^k\|^2
\\
\leq&(1+c\Delta t)\|e^{k-1}\|^2\\&+c\Delta t(\|\eta^{k-1}\|^2+\|\theta^{k-1}\|^2\|\eta^{k}\|^2+\|\theta^{k}\|^2+\Delta t\int_{t_{k-1}}^{t_k}\|u_t\|^2ds),
\nonumber
\end{aligned}
\end{equation}that is
\begin{equation}\begin{aligned}
\|e^k\|^2\leq&\frac{1+c\Delta t}{1-c\Delta}\|e^{k-1}\|^2
\\&+\frac{c\Delta t}{1-c\Delta t}(\|\eta^{k-1}\|^2+\|\theta^{k-1}\|^2+\|\eta^{k}\|^2+\|\theta^{k}\|^2+\Delta t\int_{t_{k-1}}^{t_k}\|u_t\|^2ds).\end{aligned}
\nonumber\end{equation}
Applying the discrete Gronwall's inequality with sufficient small $\Delta t$ such that $1-c\Delta t >0$, we get
$$
\|e^n\|\leq c(\Delta t+N^{-2}).
$$
Hence, we complete the proof.

\end{proof}

Furthermore, we have the following theorem:
\begin{theorem}
\label{thmm}
Suppose that $u_0\in H^2(\Omega)\bigcap H_0^1(\Omega)$, $u(x,t)$ is the solution of problem (\ref{1-1})-(\ref{1-3}) satisfying
$$
u_t\in L^2(0,T;L^2(\Omega)),\quad u_{tt}\in L^2(0,T;L^2(\Omega)).
$$
Suppose further that $u_N^k\in S_N~(k=0,1,2,\cdots)$ is the solution for problem (\ref{3-1}) and the initial value $u_N^0$ satisfies
$$
\|u_N^0-P_Nu_0\|\leq cN^{-2}\|\Delta u\|.
$$
Then, there exists a positive constant $c$ depends on $\gamma$, $\gamma_1$, $\gamma_2$, $T$ and $u_0$, independent of $N$ such that
$$
\|u(x,t_k)-u_N^k\|\leq c(\Delta t+N^{-2}),\quad j=0,1,2,\cdots,N.
$$
\end{theorem}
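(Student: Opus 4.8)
The plan is to route the comparison between the exact solution $u(t_k)=u^k$ and the fully discrete solution $u_N^k$ through the auxiliary function $w_N^k$ defined by the linear problem (\ref{spring-6}), exactly as it was used as a bridge in Lemmas \ref{lem.1} and \ref{lem.2}. The elementary decomposition
$$
u^k-u_N^k=(u^k-w_N^k)+(w_N^k-u_N^k)
$$
together with the triangle inequality gives
$$
\|u^k-u_N^k\|\leq\|u^k-w_N^k\|+\|w_N^k-u_N^k\|,
$$
so it suffices to bound the two terms on the right separately by $c(\Delta t+N^{-2})$, and these are precisely the conclusions of the two preceding lemmas.

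For the first term, Lemma \ref{lem.1} yields $\|u^k-w_N^k\|^2\leq\|u^0-w_N^0\|^2+c(\Delta t)^2$, where the constant is finite because of the hypothesis $u_{tt}\in L^2(0,T;L^2(\Omega))$. The initial discrepancy is $u^0-w_N^0=u_0-P_Nu_0$, so by property (B2) with $\mu=0$, $\sigma=2$ we obtain $\|u^0-w_N^0\|\leq cN^{-2}\|\Delta u_0\|=O(N^{-2})$; taking square roots and using $\sqrt{a^2+b^2}\leq a+b$ gives $\|u^k-w_N^k\|\leq c(\Delta t+N^{-2})$. For the second term, Lemma \ref{lem.2} gives directly $\|w_N^k-u_N^k\|\leq c(\Delta t+N^{-2})$, using $u_t\in L^2(0,T;L^2(\Omega))$; here the initial term $e_N^0=w_N^0-u_N^0=P_Nu_0-u_N^0$ that enters the discrete Gronwall recursion is controlled by the standing assumption $\|u_N^0-P_Nu_0\|\leq cN^{-2}\|\Delta u\|$.

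Adding the two estimates and absorbing the constants completes the proof. The genuine analytical work — the discrete energy estimates and the repeated use of Young's and Nirenberg's inequalities needed to close the Gronwall recursions — is entirely contained in Lemmas \ref{lem.1} and \ref{lem.2}, so at this stage the argument is purely the triangle inequality. The only point requiring attention is the bookkeeping of the initial data: one must verify that both $\|u_0-P_Nu_0\|$ and $\|u_N^0-P_Nu_0\|$ are $O(N^{-2})$, so that no term of order lower than $N^{-2}$ survives in the final bound. I do not anticipate any essential obstacle here, since both facts follow immediately from (B2) and the explicit hypothesis on $u_N^0$.
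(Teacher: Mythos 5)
Your proposal is correct and coincides with the paper's own (implicit) argument: the paper states Theorem \ref{thmm} as an immediate consequence of Lemmas \ref{lem.1} and \ref{lem.2}, which is exactly your decomposition $u^k-u_N^k=(u^k-w_N^k)+(w_N^k-u_N^k)$ through the auxiliary linear problem (\ref{spring-6}) followed by the triangle inequality. Your bookkeeping of the initial terms via (B2) and the hypothesis on $u_N^0$ is also the intended reading, so there is nothing to add.
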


\section*{Acknowledgement}

This paper was supported by the National Natural Science Foundation of China (grant
No. 11401258)   and China Postdoctoral Science Foundation (grant No. 2015M581689
). 






\end{document}